\title{On the next-nearest DLA model}
\date{}
\newtheorem{theorem}{Theorem}
\newtheorem{proposition}{Proposition}
\newtheorem{lemma}{Lemma}
\newtheorem{corollary}{Corollary}
\newtheorem{construction}{Construction}
\theoremstyle{definition}
\newtheorem{remark}{Remark}
\newtheorem{example}{Example}
\newcolumntype{M}[1]{>{\centering\arraybackslash}m{#1}}
\begin{document}

\begin{center}
\textbf{\large{ON THE GROWTH OF A BALLISTIC\\ DEPOSITION MODEL ON FINITE GRAPHS}}~\\~\\
{GEORG BRAUN}\\~\\
\end{center}

\begin{abstract}
\noindent We revisit a ballistic deposition process introduced by Atar et al. in \cite{AAK01}. Let $\mathcal{G}=(V,E)$ be a finite connected graph. We choose independently and uniformly vertices in $\mathcal{G}$. If a vertex $x$ is chosen and the previous height configuration is given by $h=(h_y)_{y \in V} \in \mathbb{N}_0^V$, the height $h_x$ is replaced by
\[
\tilde{h}_x := 1 + \max_{y \sim x} h_y.
\]
We study asymptotic properties of this growth model. We determine the asymptotic growth parameter $\gamma(\mathcal{G} )$ for some graphs and prove a central limit theorem for the fluctuations around $\gamma ( \mathcal{G})$. We also give a new graph-theoretic interpretation of an inequality obtained in \cite{AAK01}.
\end{abstract}

\section{Introduction}

Let us start with an informal description of our random growth model.\\

\textit{In a city there is an exclusive group of skyscraper owners. Once in a while, an owner decides to heighten his building until it is strictly higher than the skyscrapers of the group members he disrespects. If his building already achieves this, it will be raised by only one floor. How fast will the skyscrapers grow?}\\

Ballistic growth models are typically studied on infinite graphs, when they are believed to belong to the KPZ universality class and in two dimensions exhibit fluctuations of the order $t^{1/3}$ as the time $t$ goes to infinity (compare e.g. \cite{Qua12} and \cite{Cor16} ). However, exact results of this kind have been proven only for a few specific cases in the KPZ universe (see for example \cite{BP14} and \cite{BG16}).

In this article we restrict our attention to the case of finite underlying graphs and study the asymptotic growth of a specific deposition model. We give formulas for the asymptotic growth rate in some explicitly solvable cases and present some further asymptotic results. We also prove a classical central limit theorem for our growth model, which holds for arbitrary graphs. Finally, we give an upper bound for the asymptotic growth parameter in terms of the maximal degree of the underlying graph. This inequality is based on the methods used in \cite{AAK01}.

Let $\mathcal{G}$ be a connected undirected graph with a finite non-empty vertex set $V$ and edge set $E \subseteq \{ \{ x,y \}~|~x,y \in V,~ x \neq y \}$. For a vertex $x \in V$ we define its (closed) neighbourhood by

\[ [x] := \{ x \} \cup \left\{ y \in V~\vert~ \{x,y\} \in E \right\}.    \]

\noindent As time goes by, we successively choose independently and uniformly vertices in the graph $\mathcal{G}$. If a vertex $x \in V$ is chosen and the previous height of the process is given by $(h_y)_{y \in V} \in \mathbb{N}_0^V$, the height $h_x$ will be replaced by

\begin{equation}
\tilde{h}_x := 1 + \max\limits_{y \in [x] }~h_y .
\end{equation}

\noindent This rule defines the so-called next nearest neighbour ballistic deposition model. We distinguish between the following two closely related versions of this process.

On the one hand, we can let the time evolve in discrete steps $n=1,2,\ldots$ and always choose exactly one vertex at these time points. The height of a vertex $x \in V$ after $n$ steps will then be denoted by $H_{x,n}$. Our process is then given by $H:=(H_n)_{n \geq 0}$, where $H_n := (H_{x,n})_{x \in V}$.

On the other hand, we may choose a family $(\xi_x)_{x \in V}$ of independent Poisson processes and change the height in a vertex $x \in V$ at time $t \in (0,\infty)$ if and only if the corresponding Poisson process $\xi_x$ jumps at time $t$. Unless explicitly stated otherwise, we will assume that all Poisson processes have unit intensity. We will write $\tilde{H}_{x,t}$ for the height of a vertex $x \in V$ at time $t \in [0,\infty)$ and set $\tilde{H}_t := ( \tilde{H}_{x,t})_{x \in V}$ as well as $\tilde{H} := (\tilde{H}_t)_{t \geq 0}$.

Note that both $H$ and $\tilde{H}$ are time-homogeneous Markov processes. Usually we will assume the initial condition $H_{x,0} = \tilde{H}_{x,0} = 0$ for all $x \in V$, which ensures that both Markov processes have the same state space.

In \cite{AAK01} Atar, Athreya and Kang considered the specific case of a cyclic graph $\mathcal{G}= \mathcal{C}_n$, which can be defined to have vertex set $ \{ 1, \ldots, n \}$ and edge set $\{ \{ 1,2 \}, \{2,3 \} , \ldots, \{ n-1, n\} \}$. Then, as explained in \cite{AAK01}, Kingman's subadditive ergodic theorem yields the existence of the almost sure limit

\begin{equation}
\gamma ( \mathcal{C}_n)   = \lim\limits_{t \rightarrow \infty} \frac{1}{t} \max\limits_{x \in V} \tilde{H}_{x,t} = \lim\limits_{t \rightarrow \infty} \frac{1}{t} \min\limits_{x \in V} \tilde{H}_{x,t} \in (0,\infty).  
\end{equation}

In fact, these arguments apply in the same way to a general graph $\mathcal{G}$ and hence we always define the growth parameter $\gamma ( \mathcal{G})$ by the right hand side of (2). The asymptotic growth of our time discrete model is related to $\gamma(\mathcal{G})$ via
\begin{align}
\gamma( \mathcal{G}) = \# V \lim\limits_{n \rightarrow \infty} \frac{1}{n} \max\limits_{x \in V} H_{x,n} = \# V \lim\limits_{n \rightarrow \infty} \frac{1}{n} \min\limits_{x \in V} H_{x,n},
\end{align}

\noindent where the limits again hold almost surely. For studying the parameter $\gamma ( \mathcal{G})$ of a given graph $\mathcal{G}$ we therefore can switch from continuous time to discrete time or vice versa, and this will turn out to be advantageous sometimes.

Let us now briefly summarise the relevant previous literature on our model. The main result of \cite{AAK01} is the inequality
\begin{align}
 3.21 < \gamma ( \mathcal{C}_n) < 5.35 \quad \textnormal{for~all~} n \geq 5. 
\end{align}
\noindent The authors of \cite{AAK01} also claimed that this inequality holds for $n=4$. However, as we will see in Section 4,
\[ \gamma ( \mathcal{C}_4 ) = 2 + \frac{2}{\sqrt{3}} \approx 3.1547.  \]
This reveals a minor calculation error in \cite{AAK01} for $n=4$. The statement (4) and its proof in \cite{AAK01} are correct, however.

In \cite{FFK11} Fleurke, Formentin and K{\"u}lske assumed that the vertices of the graph $\mathcal{G}$ are not chosen uniformly, but according to a fixed Markov chain with state space $V$. They proved the existence of the limit $\gamma ( \mathcal{G})$ in this more general setting, as well as a sub-Gaussian concentration inequality for the maximal height.

In \cite{MS12} Mountford and Sudbury studied homogeneous isotropic infinite graphs and related the growth parameter $\gamma ( \mathcal{G})$ to the roughness of the surface.

In \cite{MRR19} Mansour, Rastegar and Roitershtein discussed combinatorial problems related to our model in the case of $\mathcal{G} = \mathcal{C}_n$ and conjectured that

\[  \lim\limits_{n \rightarrow \infty} \gamma(\mathcal{C}_n) = 4.  \]

For convenience of the reader, we will briefly explain the structure of the present article and where to find which result. 

In Section 2 we briefly explain relevant graph theoretic concepts and introduce some notations.

Then, in Section 3, we mainly concentrate on the class of star graphs. We will also give an example of non-isomorphic graphs $\mathcal{G}$ and $\mathcal{H}$ with $\gamma ( \mathcal{G}) = \gamma ( \mathcal{H})$.

In Section 4 a rather simple probabilistic approach is used to compute the growth parameter in a specific setting.

Section 5 contains a central limit theorem for the fluctuations around $\gamma ( \mathcal{G})$, which is proved in a rather elementary way. We also briefly explain how one can deduce more information on these fluctuations.

In Section 6 we give an upper bound for the growth parameter $\gamma ( \mathcal{G})$ by using methods of spectral graph theory. This result is based on some modifications of the arguments used in \cite{AAK01}.

In Section 7 we briefly look at another growth model, which arises by slightly modifying our deposition rule (1).

\section{Graph-theoretic Preliminaries}

The degree of a vertex $x \in V$ is $\deg (x) := \# [x] - 1$ and the maximal degree in $\mathcal{G}$ is $\Delta \mathcal{G} := \max_{x \in V} \deg(x)$. 	The graph $\mathcal{G}$ will be called regular if $\deg(x)=\deg(y)$ for all $x$, $y \in V$. A vertex $x \in V$ is called dominant in the graph $\mathcal{G}$ if $[x] = V$. 

A path of length $n$ in $\mathcal{G}$ is a tuple $(x_1,\ldots,x_n) \in V^n$ with $\{ x_i, x_{i+1} \} \in E$ for all $i=1,\ldots,n-1$. If in addition $x_1 = x_n$, $n \geq 2$ and $x_i \neq x_j$ for all $i,j=1,\ldots,n-1$ with $i \neq j$, we will call $(x_1,\ldots,x_n)$ a cycle in $\mathcal{G}$. The length of the smallest cycle of a graph $\mathcal{G}$ will be denoted by girth($\mathcal{G}$). If there is no cycle in $\mathcal{G}$, we set girth($\mathcal{G}):=\infty$. We define $d_\mathcal{G}(x,x):=0$ for all $x \in V$, and for vertices $x \neq y$ we define $d_\mathcal{G}(x,y)$ to be the length of the smallest path from $x$ to $y$. Note that $d_\mathcal{G}$ is a metric on $V$. A permutation $(x_1,\ldots,x_{\# V})$ of $V$ will be called non-decreasing, if the function $k \mapsto d_\mathcal{G}(x_1,x_k)$ is non-decreasing.

For a graph $\mathcal{G}$ we denote by $A(\mathcal{G})$ a $\# V \times \# V$ adjacency matrix of $\mathcal{G}$. For $(x,y) \in V^2$ the corresponding entry of $A(\mathcal{G})$ is one, if $\{ x,y \} \in E$, and zero otherwise. Recall that more generally, for fixed $n \geq 1$,  the matrix entries of $A(\mathcal{G})^n$ count the number of paths between two vertices.

Given two graphs $\mathcal{G} = (V,E)$, $\mathcal{G}' = (V', E')$ we say that $\mathcal{G}$ is a subgraph of $\mathcal{G}'$ if $V \subseteq V'$ and $E \subseteq E'$.

For $n \geq 1$ we will write $\mathcal{S}_n$ to denote a star graph with $n$ vertices. Formally, one can choose $\{ 1, \ldots, n \}$ and $\{ \{ 1, 2 \}, \ldots, \{ 1, n \} \}$ as vertex set respectively edge set. We further denote by $\mathcal{K}_n$ a complete graph with $n$ vertices. For even $n \geq 2$ we denote by $\mathcal{R}_n$ a graph, which consists of $n$ vertices and is regular with $\Delta \mathcal{R}_n = n-2$. Note that $\mathcal{R}_n$ is unique up to an isomorphism. Formally, one might choose $\{ 1, \ldots, n \}$ and $\{ \{ 1,2 \}, \ldots , \{ 1, n-1 \}, \{2,3 \}, \ldots, \{ 2, n-2 \}, \{ 2, n \}, \{3,4 \}, \ldots \}$ as vertex set respectively edge set.

Note that $\gamma ( \mathcal{K}_n) = n$ for all $n \geq 1$. 
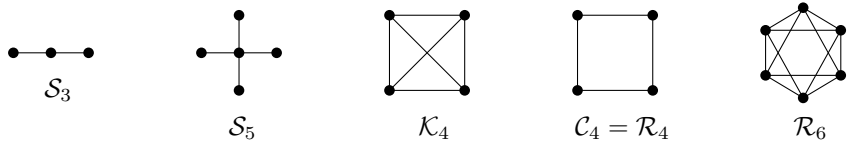
\begin{figure}[!h]
\begin{center}
\begin{tikzpicture}[baseline=0]
    \fill (0,0) circle (2pt);
    \fill (0.5,0) circle (2pt);
    \fill (-0.5,0) circle (2pt);
    \draw (0,0)--(0.5,0);
    \draw (0,0)--(-0.5,0);
    
    \node at (0.1,-0.5) {$\mathcal{S}_3$};
    
	\begin{scope}[xshift=2.5cm]
    \fill (0,0) circle (2pt);
    \fill (0.5,0) circle (2pt);
    \fill (-0.5,0) circle (2pt);
    \fill (0,0.5) circle (2pt);
    \fill (0,-0.5) circle (2pt);
    \draw (0,0)--(0.5,0);
    \draw (0,0)--(-0.5,0);
    \draw (0,0)--(0,0.5);
    \draw (0,0)--(0,-0.5);    
    \node at (0.05,-1) {$\mathcal{S}_5$};
	\end{scope}
	
	\begin{scope}[xshift=5cm]
	\fill (-0.5,-0.5) circle (2pt);
	\fill (-0.5,0.5) circle (2pt);
	\fill (0.5,-0.5) circle (2pt);
	\fill (0.5,0.5) circle (2pt);
	\draw (0.5,0.5)--(-0.5,0.5)--(-0.5,-0.5)--(0.5,-0.5)--(0.5,0.5);
	\draw (0.5,0.5)--(-0.5,-0.5);
	\draw (-0.5,0.5)--(0.5,-0.5);
	\node at (0.1,-1) {$\mathcal{K}_4$};
	\end{scope}
	
	\begin{scope}[xshift=7.5cm]
	\fill (-0.5,-0.5) circle (2pt);
	\fill (-0.5,0.5) circle (2pt);
	\fill (0.5,-0.5) circle (2pt);
	\fill (0.5,0.5) circle (2pt);
	\draw (0.5,0.5)--(-0.5,0.5)--(-0.5,-0.5)--(0.5,-0.5)--(0.5,0.5);
	\node at (0.1,-1) {$\mathcal{C}_4 = \mathcal{R}_4$};
	\end{scope}
	
	\begin{scope}[xshift=10cm]
	\fill (-0.5,-0.3) circle (2pt);
	\fill (-0.5,0.3) circle (2pt);	
	\fill (0,0.6) circle (2pt);	
	\fill (0,-0.6) circle (2pt);
	\fill (0.5,-0.3) circle (2pt);
	\fill (0.5,0.3) circle (2pt);
	\draw (-0.5,-0.3)--(0,-0.6)--(0.5,-0.3)--(0.5,0.3)--(0,0.6)--(-0.5,0.3)--(-0.5,-0.3);	
	\draw (-0.5,-0.3)--(0.5,-0.3)--(0,0.6)--(-0.5,-0.3);
	\draw (-0.5,0.3)--(0.5,0.3)--(0,-0.6)--(-0.5,0.3);
	\node at (0.1,-1) {$\mathcal{R}_6$};
	\end{scope} 
	\end{tikzpicture}
\end{center}
\caption{Some of the graphs we will study.}
\end{figure}

By definition of our deposition model each vertex $x$ interacts with the growth of the process only by its (closed) neighbourhood $[x]$. Therefore vertices $x$, $y \in V$ will be called equivalent, if $[x]=[y]$. The graph, which arises from $\mathcal{G}$ by identifying all equivalent vertices, will be denoted by $\hat{\mathcal{G}}$ and called an irreducible graph. Note that the (asymptotic) growth in our model does not change, if we replace $\mathcal{G}$ by $\hat{\mathcal{G}}$ and modify the intensity of the underlying Poisson processes accordingly. More precisely, the intensity of the Poisson process associated to a vertex $\hat{x}$ in $\hat{\mathcal{G}}$ has to equal the number of vertices $x$ in $\mathcal{G}$ which have been contracted into $\hat{x}$.

Note that this transformation can also be applied in the reversed way. Let a graph with positive integer intensities for all vertices be given. Then we can stepwise choose the vertices with not unit intensity, define a new adjacent vertex with unit intensity and the same closed neighbourhood, and reduce the intensity of the originally chosen vertex by one. We will use the term vertex cloning for this procedure. Again, note that the order, in which the vertices are chosen, does not affect the resulting graph up to an isomorphism.

\begin{example}
For the butterfly graph $\mathcal{B}$ we have the identification

\begin{center}
\begin{tikzpicture}[baseline=0]
    \fill (0,0) circle (2pt);
    \fill (0.5,0.25) circle (2pt);
    \fill (0.5,-0.25) circle (2pt);
    \fill (-0.5,-0.25) circle (2pt);
    \fill (-0.5,0.25) circle (2pt);
    \draw (0,0)--(0.5,0.25);
    \draw (0,0)--(0.5,-0.25);
    \draw (0,0)--(-0.5,0.25);
    \draw (0,0)--(-0.5,-0.25);
    \draw (0.5,-0.25)--(0.5,0.25);
    \draw (-0.5,-0.25)--(-0.5,0.25);
    
    \node at (0,-0.9) {$\mathcal{G} = \mathcal{B}$};
    
	\begin{scope}[xshift=3cm]
    \node at (0,0) {$\longleftrightarrow$};
    \node at (0,-0.9) {$\longrightarrow$};
	\end{scope}    
    
    \begin{scope}[xshift=6cm]
    \fill (0,0) circle (2pt);
    \fill (0.5,0) circle (2pt);
    \fill (-0.5,0) circle (2pt);
    \draw (0,0)--(0.5,0);
    \draw (0,0)--(-0.5,0);
    \node at (0.0,-0.3) {$1$};
    \node at (0.5,-0.3) {$2$};
    \node at (-0.5,-0.3) {$2$};
    \node at (0,-0.9) { $ \hat{\mathcal{G}} = \mathcal{S}_3$ };
	\end{scope}    
	\end{tikzpicture}
\end{center}
\end{example}

\noindent For studying the growth in our ballistic deposition model, the following three settings are essentially the same.

\begin{tabular}{c l}
(i)&Arbitrary graphs $\mathcal{G}$ with unit intensities.\\
(ii)&Arbitrary graphs $\mathcal{G}$ with positive integer intensities.\\
(iii)&Irreducible graphs $\hat{\mathcal{G}}$ with positive integer intensities.\\ \\
\end{tabular}

\noindent Observe that when working in the setting (ii), the asymptotic growth parameter changes linearly if we multiply all intensities by a fixed constant. Translating this into our original setting (i) therefore yields the following construction.

\begin{construction}
Let $\mathcal{G}$ be a graph and $n \in \mathbb{N}$. Then there is a graph $\mathcal{H}$ with
\[ \gamma ( \mathcal{H}) = n~\gamma ( \mathcal{G}). \]
Such a graph $\mathcal{H}$ can be obtained by cloning each vertex of $\mathcal{G}$ exactly $n-1$ times.
\end{construction}

\begin{example}
Let us illustrate Construction 1 for $n=2$ and $\mathcal{G} = \mathcal{S}_3$, when we start with cloning the dominant vertex.

\begin{center}
\begin{tikzpicture}

	\fill (-5,0) circle (2pt);
    \fill (-4.5,0) circle (2pt);
    \fill (-4,0) circle (2pt);
   	\draw (-5,0)--(-4.5,0)--(-4,0);
    
    \node at (-3,0) {$\longrightarrow$};    
    
	\fill (-2,0) circle (2pt);
	\fill (-1.5,0.3) circle (2pt);
	\fill (-1.5,-0.3) circle (2pt);
	\fill (-1,0) circle (2pt);
	\draw (-2,0)--(-1.5,0.3)--(-1,0)--(-1.5,-0.3)--(-2,0);
	\draw (-1.5,0.3)--(-1.5,-0.3);

	\node at (0,0) {$\longrightarrow$};

	\fill (1,-0.3) circle (2pt);
	\fill (1,0.3) circle (2pt);
	\fill (1.5,0.3) circle (2pt);
	\fill (1.5,-0.3) circle (2pt);
	\fill (2,0) circle (2pt);
	\draw (1,0.3)--(1.5,0.3)--(2,0);
	\draw (1,-0.3)--(1.5,-0.3)--(2,0);
	\draw (1,0.3)--(1,-0.3);
	\draw (1.5,0.3)--(1.5,-0.3);
	\draw (1,0.3)--(1.5,-0.3);
	\draw (1,-0.3)--(1.5,0.3);		
	
	\node at (3,0) {$\longrightarrow$};
	
	\fill (4,-0.3) circle (2pt);
	\fill (4,0.3) circle (2pt);
	\fill (4.5,0.3) circle (2pt);
	\fill (4.5,-0.3) circle (2pt);
	\fill (5,0.3) circle (2pt);
	\fill (5,-0.3) circle (2pt);
	\draw (4,0.3)--(4.5,0.3)--(5,0.3);
	\draw (4,-0.3)--(4.5,-0.3)--(5,-0.3);
	\draw (4,0.3)--(4.5,-0.3)--(5,0.3);
	\draw (4,-0.3)--(4.5,0.3)--(5,-0.3);
	\draw (4,-0.3)--(4,0.3);
	\draw (4.5,-0.3)--(4.5,0.3);
	\draw (5,-0.3)--(5,0.3);
	
\end{tikzpicture}~\\
\end{center}

\noindent For the resulting graph $\mathcal{H}$ we know $\gamma ( \mathcal{H} ) = 2  \gamma ( \mathcal{S}_3)$.
\end{example}

\section{On the Sequence of Star Graphs}

Fix $n \geq 3$ and consider the case $\mathcal{G} = \mathcal{S}_n$. Then, by stopping our deposition process at the points of time, at which the height of the dominant vertex is increased, we obtain a process with i.i.d. increments. By recalling the equations (2) and (3) and applying the law of large numbers for both discrete and continuous time version, we obtain the formulas
\begin{align}
\gamma ( \mathcal{S}_n) = 1 + \frac{1}{n} \sum\limits_{k=1}^\infty \frac{a_{n-1,k}}{n^{k}}, \quad a_{n,k} := \sum\limits_{\substack{r_1,\ldots,r_{n} \geq 0 \\ r_1 + \ldots + r_{n} = k} }  \binom{n}{r_1, \ldots, r_{n}}~\max_{j=1,\ldots,n} r_j .
\end{align} \begin{align}
\gamma ( \mathcal{S}_n) = 1 + \mathbb{E} \left[ \max\limits_{j=1,\ldots,n-1} U_{j,W} \right],
\end{align}
where $W$ denotes an exponentially distributed random variable with mean $1$, $U_{j,\lambda}$ is Poisson distributed with mean $\lambda$ for all $j \in \mathbb{N}$ and $\lambda \in (0,\infty)$, and all random variables are assumed to be independent of each other. 

In fact, we can determine the exact value of $\gamma ( \mathcal{S}_3)$ by working directly with (5). Fix $k \geq 1$. Then we have 
\begin{align*}
a_{2,2k} &= \sum\limits_{l=0}^{2k} \binom{2k}{l} \max \{ l, 2k -l \} = 2 \sum\limits_{l=k+1}^{2k} \binom{2k}{l} l + k \binom{2k}{k}\\
&= 4k \sum\limits_{l=k}^{2k-1} \binom{2k-1}{l} + k \binom{2k}{k} = k 2^{2k} + k \binom{2k}{k}.
\end{align*}
In the same way we obtain 
\[ a_{2,2k+1} = (2k+1)  2^{2k} + (2k+1)  \binom{2k}{k}. \]
These two formulas allow us to directly verify the recurrence relation 
\begin{equation*}
a_{2,k} = 2~\frac{k}{k-1}~a_{2,k-1} +4~\frac{k-3}{k-2}~a_{2,k-2} -8~a_{2,k-3}  \quad \textnormal{for~all~} k \geq 3.
\end{equation*}
By neglecting the last term in this recursion we easily see that for a $s \in (0,1)$ small enough the generating function $g(s):=\sum_{k=1}^\infty \frac{a_{2,k}}{k} s^k$ is finite. Hence, for all $s \in (0,1)$ small enough, the recurrence relation implies
\[  \left( 2s+ 1 \right) s \left( \left( 4 s^2 -1 \right) g'(s) + 2 g(s) \right) + 2 s (s-1) = 0.  \]
Using the initial condition $g(0)=0$ we obtain for all $s \in (0,1)$ small enough 
\[ g(s) = \frac{  4s - 1 + \sqrt{1 - 4s^2}}{2 - 4 s}. \]
By monotone convergence we conclude that this formula holds for all $s \in [0,1/2)$ and therefore equation (5) yields 
\[  \gamma ( \mathcal{S}_3) = 1 + \frac{1}{9} \cdot g' \left( \frac{1}{3} \right) =  2 + \frac{1}{\sqrt{5}} . \]

\begin{remark}
The sequence $(a_{2,k})_{k \geq 1}$ is mentioned in the OEIS under A230137. 
\end{remark}

\begin{remark}
The series representation in (5) is hard to work with in general, but at least allows rather precise calculations. We obtain, for example,
\[  \gamma ( \mathcal{S}_4) = 2.72446357391224888 \ldots . \]
We could not find an integer coefficient polynomial, which might have this value as a root. Hence we conjecture that $\gamma ( \mathcal{S}_4)$ is transcendental.
\end{remark}

\begin{proposition}
There are non-isomorphic graphs $\mathcal{G}$ and $\mathcal{H}$ with $\gamma ( \mathcal{G}) = \gamma ( \mathcal{H})$.
\end{proposition}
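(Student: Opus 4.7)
The plan is to exhibit two explicit non-isomorphic graphs $\mathcal{G}, \mathcal{H}$ with $\gamma(\mathcal{G}) = \gamma(\mathcal{H})$ by first computing $\gamma$ for the butterfly graph $\mathcal{B}$ of Example 1 in closed form, showing it is a rational number with denominator $3$, and then clearing the denominator via Construction 1 so that the result also equals $\gamma(\mathcal{K}_n)$ for a suitable complete graph.

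For the computation, I will adapt the derivation of $\gamma(\mathcal{S}_3) = 2 + 1/\sqrt{5}$ to the weighted star $\hat{\mathcal{B}} = \mathcal{S}_3$ with intensities $(1,2,2)$. Stopping the continuous-time process at consecutive jumps of the centre's unit-rate Poisson clock, the increments of $h_c$ are i.i.d.\ and distributed as $1 + \max(U_1, U_2)$, where $W \sim \mathrm{Exp}(1)$ is the inter-arrival time and, conditionally on $W$, the variables $U_1, U_2$ are independent $\mathrm{Poisson}(2W)$. Conditioning on $W$, expanding $\max(U_1, U_2)$ in terms of the coefficients $a_{2,n}$ from (5), and integrating out $W$ yields
\[
\gamma(\mathcal{B}) \;=\; 1 + \frac{1}{5} \sum_{n=1}^{\infty} \left( \frac{2}{5} \right)^{\!n} a_{2,n} \;=\; 1 + \frac{2}{25}\, g'\!\left( \frac{2}{5} \right),
\]
using the identity $\sum_{n \geq 1} a_{2,n} s^n = s\, g'(s)$. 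Because $2/5 < 1/2$, the closed form $g(s) = (4s - 1 + \sqrt{1-4s^2})/(2-4s)$ derived in the paper is valid at $s = 2/5$, and a short calculation gives $g'(2/5) = 100/3$, whence $\gamma(\mathcal{B}) = 1 + 8/3 = 11/3$.

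With this in hand, applying Construction 1 with $n = 3$ to $\mathcal{B}$ produces a graph $\mathcal{G}$ on $15$ vertices with $\gamma(\mathcal{G}) = 3 \cdot 11/3 = 11$, while $\mathcal{H} := \mathcal{K}_{11}$ satisfies $\gamma(\mathcal{H}) = 11$ by the formula $\gamma(\mathcal{K}_n) = n$ recalled in Section 2; since $\mathcal{G}$ and $\mathcal{H}$ have different numbers of vertices, they cannot be isomorphic. The main obstacle is the weighted-star computation above: one must carefully track how the leaf intensity $2$ (instead of $1$) changes both the Poisson means of the $U_j$ and the exponential weight attached to $W$, and verify that after integration the resulting series evaluates the generating function $g'$ at $s = 2/5$ rather than at $s = 1/3$ as in the paper's derivation for $\mathcal{S}_3$. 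Once that adaptation is in place, the comparison with $\mathcal{K}_{11}$ is immediate.
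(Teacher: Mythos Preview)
Your proposal is correct and follows essentially the same route as the paper: compute $\gamma(\mathcal{B}) = 1 + \tfrac{2}{25}\,g'(2/5) = 11/3$ via the weighted-star reduction, then apply Construction~1 with $n=3$ and compare the resulting $15$-vertex graph to $\mathcal{K}_{11}$. The only cosmetic difference is that you spell out the continuous-time derivation (integrating the Poisson conditional against the exponential weight so that the factorials cancel), whereas the paper simply cites ``a similar calculation'' leading to the identical formula.
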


\begin{proof}
For the butterfly graph $\mathcal{B}$ we find by a similar calculation
\[ \gamma ( \mathcal{B}) = 1 + \frac{2}{25} \cdot g' \left( \frac{2}{5} \right) = \frac{11}{3}.  \]
By applying Construction 1 with $n=3$ to $\mathcal{B}$ we therefore obtain a graph $\mathcal{H}$ with $\gamma ( \mathcal{H} ) = 11 = \gamma ( \mathcal{K}_{11})$, which is clearly not isomorphic to $\mathcal{K}_{11}$. \qedhere \end{proof}

We have the following combinatorial interpretation of equation (5). Assume we have $n$ bins and $m$ balls. Then we throw the balls independently of each other in one uniformly chosen bin. Denote by $Z_{n,m}$ the number of balls in the maximally loaded box and let $Y_n$ be a random variable, which is independent of $(Z_{n,m})_{m \geq 1}$ and geometrically distributed with mean $n$. Then (5) reads as \medskip
\[ \gamma ( \mathcal{S}_n) = 1 + \mathbb{E} \left[ Z_{n-1,Y_{n-1}} \right]. \]
A classical result due to Gonnet, see \cite{Gon81}, states that for fixed $c \in (0,\infty)$ \medskip
\begin{equation}
\mathbb{E} \left[ Z_{n, \lfloor c n \rfloor} \right] \sim \Gamma^{-1}(n) \sim\frac{\log(n)}{\log(\log(n))} \quad \textnormal{for~} n \rightarrow \infty. 
\end{equation}
Here, and in the following, we will use the notation $a_n \sim b_n$ for $n \rightarrow \infty$ instead of $\lim_{n \rightarrow \infty} a_n/b_n = 1$.

An important tool in Gonnet's proof of (7) is the Poisson approximation. More precisely, it is verified in \cite{Gon81}, that for fixed $\lambda \in (0,\infty)$
\begin{align}
f_n(\lambda) := \mathbb{E} \left[ \max\limits_{j=1,\ldots,n} U_{j,\lambda} \right] \sim \frac{ \log(n)}{\log(\log(n))} \quad \textnormal{for~} n \rightarrow \infty. 
\end{align}

\begin{proposition}
\[ \gamma ( \mathcal{S}_n) \sim \frac{\log(n)}{\log(\log(n))} \quad \textnormal{as}~ n \rightarrow \infty.  \]
In particular $\gamma ( \mathcal{S}_n) \rightarrow \infty$ for $n \rightarrow \infty$.
\end{proposition}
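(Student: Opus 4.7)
The plan is to work from representation (6), which gives $\gamma(\mathcal{S}_n) = 1 + \mathbb{E}[f_{n-1}(W)]$ with $W$ exponentially distributed of mean $1$. Since (8) already says $f_{n-1}(\lambda) \to \infty$ for each fixed $\lambda > 0$, the additive constant $1$ is asymptotically negligible, and it suffices to prove $\mathbb{E}[f_{n-1}(W)] \sim \log(n)/\log(\log(n))$. I shall use throughout that $f_n(\lambda)$ is non-decreasing in $\lambda$, which follows from the standard coupling writing a Poisson$(\lambda_2)$ variable as the sum of an independent Poisson$(\lambda_1)$ and Poisson$(\lambda_2-\lambda_1)$.

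For the lower bound I fix $\epsilon>0$ and exploit monotonicity:
\[
\mathbb{E}[f_{n-1}(W)] \geq f_{n-1}(\epsilon)\,\mathbb{P}(W \geq \epsilon) = e^{-\epsilon}\, f_{n-1}(\epsilon).
\]
Combined with (8), this yields $\liminf_n \mathbb{E}[f_{n-1}(W)] \cdot \log(\log n)/\log(n) \geq e^{-\epsilon}$, and sending $\epsilon \downarrow 0$ finishes this direction.

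The upper bound rests on a sublinearity estimate
\[
f_n(\lambda) \leq 2\lambda\, f_n(1) \qquad \text{for all } \lambda \geq 1.
\]
To prove it, I would represent a Poisson variable of integer parameter $k$ as a sum of $k$ independent Poisson$(1)$ variables $U_{j,1}^{(i)}$, apply the deterministic inequality $\max_j \sum_{i=1}^k U_{j,1}^{(i)} \leq \sum_{i=1}^k \max_j U_{j,1}^{(i)}$ to get $f_n(k) \leq k f_n(1)$, and then interpolate to non-integer $\lambda$ via monotonicity. With this in hand I split, for any $M \geq 1$,
\[
\mathbb{E}[f_{n-1}(W)] \leq f_{n-1}(M) + 2 f_{n-1}(1)\, \mathbb{E}[W \mathbf{1}_{W > M}] = f_{n-1}(M) + 2(M+1)\, e^{-M}\, f_{n-1}(1),
\]
apply (8) to both $f_{n-1}(M)$ and $f_{n-1}(1)$, and finally send $M \to \infty$ to obtain $\limsup_n \mathbb{E}[f_{n-1}(W)] \cdot \log(\log n)/\log(n) \leq 1$.

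The main obstacle is precisely the tail of $W$: any pointwise upper bound on $f_n(\lambda)$ must be uniform in $n$ yet has to grow with $\lambda$ since $f_n(\lambda) \geq \lambda$. The sublinearity bound $f_n(\lambda) \leq 2\lambda\, f_n(1)$, combined with the exponential decay of $\mathbb{E}[W \mathbf{1}_{W > M}]$, is the essential ingredient that renders the contribution from large $W$ negligible compared to $\log(n)/\log(\log(n))$.
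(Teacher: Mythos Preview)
Your proof is correct and rests on the same core ingredients as the paper: representation (6), monotonicity of $f_n$ for the lower bound, and subadditivity of $f_n$ (which you rephrase as sublinearity) for the upper bound. The lower-bound arguments are identical. For the upper bound you iterate subadditivity to the explicit estimate $f_n(\lambda)\le 2\lambda\,f_n(1)$ for $\lambda\ge 1$ and then control the tail $\mathbb{E}[W\mathbf{1}_{W>M}]$ directly; the paper instead applies subadditivity once in the form $f_{n-1}(\lambda+r)\le f_{n-1}(\lambda)+f_{n-1}(r)$ after the change of variable $\lambda\mapsto\lambda+r$, which yields the self-referential inequality $(1-e^{-r})\,\gamma(\mathcal{S}_n)\le 1+f_{n-1}(r)$ and avoids any explicit tail computation. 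Both routes are equally short; the paper's is marginally slicker in that no auxiliary constant appears in front of $f_{n-1}$, while yours is perhaps more transparent about where the integrability of the tail is used.
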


\begin{proof}
The convolution property of the Poisson distribution implies that the functions $f_n = f_n ( \lambda)$, $n \geq 1$, are both monotone increasing and subadditive.

\noindent Fix $r \in (0,\infty)$ and recall equation (6). Then, due to monotonicity, we have 
\begin{align*}
\gamma ( \mathcal{S}_n) = 1 + \int_0^\infty e^{-\lambda} f_{n-1} ( \lambda)~\textnormal{d} \lambda \geq \int_r^\infty e^{-\lambda} f_{n-1} (r)~\textnormal{d} \lambda = e^{-r} f_{n-1} ( r).
\end{align*}
Now apply (8) and let $r \rightarrow 0$ to conclude
\[ \liminf\limits_{n \rightarrow \infty} \gamma ( \mathcal{S}_n) \frac{\log(\log(n))}{\log(n)} \geq 1.  \]
On the other hand, we have for fixed $r \in (0,\infty)$ the estimate
\begin{align*}
\gamma ( \mathcal{S}_n) &\leq 1 + \int_0^r e^{-\lambda} f_{n-1} (r)~\textnormal{d} \lambda + \int_r^\infty e^{-\lambda} f_{n-1} ( \lambda)~\textnormal{d} \lambda\\
&= 1 + \left( 1 - e^{-r} \right) f_{n-1} (r) + e^{-r} \int_0^\infty e^{-\lambda} f_{n-1} ( \lambda + r)~\textnormal{d} \lambda.
\end{align*}
Now, by using subadditivity of $f_{n-1}$, we obtain
\[
\gamma ( \mathcal{S}_n) \leq 1 + f_{n-1}(r) + e^{-r} \int_0^\infty e^{-\lambda} f_{n-1} (\lambda)~\textnormal{d}\lambda \leq 1 + f_{n-1}(r) + e^{-r} \gamma (\mathcal{S}_n).
\]
By applying (8) and letting $r \rightarrow \infty$ we therefore conclude
\[ \limsup\limits_{n \rightarrow \infty} \gamma ( \mathcal{S}_n) \frac{\log(\log(n))}{\log(n)} \in [0,1]. \qedhere  \] 
\end{proof}

\section{A more General Approach for Calculations}

\begin{theorem}
Let $N \geq 0$, $n \geq 1$ and $m \geq 2$. Assume $m$ is even and $N \geq 1$ if $m=2$. Let $\mathcal{G} = (V,E)$ be the graph, which arises from $\mathcal{R}_m$ by the following procedure. 

\begin{tabular}{c l}
\textnormal{(i)}&Clone each vertex of $\mathcal{R}_m$ exactly $n-1$ times.\\
\textnormal{(ii)}&Add $N$ new vertices $x_1,\ldots,x_N$ to $V$.\\
\textnormal{(iii)}&Add all edges of the form $\{ x_i,y \}$ with $y \in V \setminus \{ x_i \}$ to $E$.\\ \\
\end{tabular}

\noindent Note that $\# V = N+nm$. Set $\kappa:= \frac{\# V}{2n}$ and $\tau := ( \sqrt{\kappa^2 -1} - \kappa + 1  )^{-1}$. Then
\[ \gamma ( \mathcal{G} ) = \# V - \frac{n^2 m}{\# V} \tau \left\{ \tau + \frac{1}{2 \kappa} \right\}^{-1}  . \]
\end{theorem}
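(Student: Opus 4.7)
The plan is to first pass to the irreducible graph $\hat{\mathcal{G}}$: by the cloning and contraction of Section~2, $\mathcal{G}$ is equivalent for growth purposes to the graph with one super-dominant vertex $d$ of intensity $N$ (omitted if $N=0$) together with $m$ cluster vertices $v_1,\ldots,v_m$ of intensity $n$ each, where $v_i$ is adjacent to $v_j$ exactly when $j\neq\bar{i}$, and $d$ is adjacent to all cluster vertices. Here $i\mapsto\bar{i}$ denotes the fixed-point-free involution on $\{1,\ldots,m\}$ pairing each index with its unique non-neighbour, encoding the antipair structure of $\mathcal{R}_m$. Inspecting closed neighbourhoods in the continuous-time process on $\hat{\mathcal{G}}$, a jump of vertex $v_i$ fails to raise the running maximum $M(t)$ precisely when $v_{\bar{i}}$ is the unique current maximum, while any jump of $d$ (and any jump from a multi-max configuration other than an antipair) raises $M$ by $1$. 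Weighting the failing jumps by their intensities in stationarity and using the symmetry across the $m$ clusters yields
\[
\gamma(\mathcal{G}) \;=\; \#V - nm\, q, \qquad q := \mathbb{P}(v_1 \text{ is the unique maximum in stationarity}).
\]

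The bulk of the argument is to compute $q$ via the stationary distribution of the configuration modulo a common shift. I would lump states, by symmetry, into $(c,\delta)$ (some cluster vertex is the unique max, with gap $\delta\geq 1$ to the second-largest height), $(d,\delta)$ (analogous with max at $d$), and a single state $\star$ in which the two vertices of one antipair jointly attain the max. The delicate step is to check that this lumped chain is indeed closed: one verifies by case analysis that every jump out of such a state lands again in one of these states, and that the new $\delta$ is readable off the transition. The rates out of $(c,\delta)$ are then $n$ for $\delta\mapsto\delta+1$ (self-update of the current max), $n$ for $\delta\mapsto\delta-1$ (update of the non-neighbour, with the convention that $\delta=1$ jumps into $\star$), $n(m-2)$ back to $(c,1)$ (update of any other cluster vertex), and $N$ to $(d,1)$; the rates out of $(d,\delta)$ and $\star$ are similar and simpler.

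Writing $\mu$ for the stationary distribution, the internal balance at $(c,\delta)$ for $\delta\geq 2$ becomes the second-order recurrence $\mu(c,\delta+1)-2\kappa\,\mu(c,\delta)+\mu(c,\delta-1)=0$ with characteristic polynomial $x^2-2\kappa x+1$; summability selects the stable root $x_-=\kappa-\sqrt{\kappa^2-1}$, and the theorem's $\tau$ is precisely $\tau=(1-x_-)^{-1}$, so that $\mu(c,\delta)=Cx_-^{\delta}$ for some normalising constant $C>0$. Balance at $(d,\delta)$ similarly yields a geometric tail with ratio $N/\#V$. Balance at $\star$ gives $\mu(\star)=nCx_-/\#V$, balance at $(d,1)$ gives $\#V\,\mu(d,1)=N(\mu_c+\mu(\star))$ where $\mu_c:=\sum_{\delta\geq 1}\mu(c,\delta)$, and the boundary balance at $(c,1)$ turns out to reduce to the characteristic equation itself and so is automatic. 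Normalising $\mu_c+\mu_d+\mu(\star)=1$ (with $\mu_d:=\sum_{\delta\geq 1}\mu(d,\delta)$) then produces $\mu_c=m\tau/(2\kappa\tau+1)$ after a short manipulation; since $q=\mu_c/m$ by symmetry, rewriting $2\kappa\tau+1=2\kappa(\tau+1/(2\kappa))$ and substituting into the rate formula gives the stated closed form for $\gamma(\mathcal{G})$.
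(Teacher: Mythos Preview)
Your proposal is correct and follows essentially the same route as the paper: both reduce to a gap-tracking Markov chain whose tail satisfies the linear recurrence with characteristic roots $\kappa\pm\sqrt{\kappa^2-1}$, compute the invariant measure, and read off $\gamma(\mathcal{G})$ by an ergodic argument. The paper works in discrete time and lumps your states $(d,\delta)$ and $\star$ into a single state $0$ (then recovers $\gamma=\#V-n(1-\Pi(0))$ via a snake-chain computation with conditional probabilities for the ambiguous transitions), whereas your continuous-time version on the irreducible graph, with its finer state space, arrives at the same answer through the cleaner identity $\gamma(\mathcal{G})=\#V-n\mu_c$; your $\mu_c$ is exactly the paper's $1-\Pi(0)$.
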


\begin{proof}
We define a stochastic process $(\Delta_n)_{n \geq 0}$ as follows. If a dominant vertex has the maximal height at time $n \geq 0$, we set $\Delta_n:=0$. Otherwise, there are at most two different vertices which share the maximal height. If there are two different vertices of maximal height, again set $\Delta_n := 0$. Otherwise, let $x$ be the unique vertex of maximal height and choose a vertex $y$, whose height is maximal under all vertices, which are not equivalent to $x$. Then set $\Delta_n := H_{x,n} - H_{y,n}$.

It is not hard to see that the process $(\Delta_n)_{n \geq 0}$ is a time-homogeneous Markov chain. Consider, for example, the case $\Delta_n = m$ for a $n \in \mathbb{N}$ and a $m \geq 3$. Denote by $x$ be the unique vertex of maximal height and by $y$ be the vertex, whose height is increased in the next step. Then we know $\Delta_{n+1} = 0$ if $y$ is a dominant vertex in $\mathcal{G}$. If $y$ is equivalent to $x$, we can conclude $\Delta_{n+1} = m+1$. We also know $\Delta_{n+1} = m-1$ if $y$ is a vertex, which is not connected to $x$. Finally, if $y$ is connected to $x$ but neither equivalent to $x$ nor dominant in $\mathcal{G}$, then $\Delta_{n+1} = 1$.

The transition probabilities of $(\Delta_n)_{n \geq 0}$ are illustrated in the following picture.

\noindent \begin{tikzpicture}[scale=2] 
\node (0) at (0,0) {$0$};
\node (1) at (1,0) {$1$};
\node (2) at (2,0) {$2$};
\node (3) at (3,0) {$3$};
\node (4) at (4,0) {$4$};
\node (...) at (4.9,0) {$\ldots$};
\draw [->] (0.1,0.2) arc (160:20:12pt);
\draw [->] (1.1,0.2) arc (160:20:12pt);
\draw [->] (2.1,0.2) arc (160:20:12pt);
\draw [->] (3.1,0.2) arc (160:20:12pt);
\draw [->] (4.1,0.2) arc (160:20:12pt);
\draw [<-] (0.1,-0.2) arc (200:340:12pt);
\draw [<-] (1.1,-0.2) arc (200:340:12pt);
\draw [<-] (2.1,-0.2) arc (200:340:12pt);
\draw [<-] (3.1,-0.2) arc (200:340:12pt);
\draw [<-] (4.1,-0.2) arc (200:340:12pt);
\draw [->] (-0.1,0.1) arc (45:360:10pt);
\draw [->] (0.95,0.1) arc(50:310:5pt and 4pt);

\node at (-0.5,0.35) {$\frac{N}{\# V}$};
\node at (0.5,0.65) {$\frac{nm}{\# V}$};
\node at (1.5,0.65) {$\frac{n}{\# V}$};
\node at (2.5,0.65) {$\frac{n}{\# V}$};
\node at (3.5,0.65) {$\frac{n}{\# V}$};
\node at (4.5,0.65) {$\frac{n}{\# V}$};
\node at (0.5,-0.3) {$\frac{N+n}{\# V}$};
\node at (0.4,0) {\tiny $\frac{n(m-2)}{\# V}$};
\node at (1.5,-0.3) {$\frac{n(m-1)}{\# V}$};
\node at (2.5,-0.3) {$\frac{n}{\# V}$};
\node at (3.5,-0.3) {$\frac{n}{\# V}$};
\node at (4.5,-0.3) {$\frac{n}{\# V}$};
\node at (0.1,1.4) {$\frac{N}{\# V}$};
\node at (0.9,-1.0) {$\frac{n(m-2)}{\# V}$};

\draw[<-] (0,0.4) arc(180:0:1cm and 0.7cm);
\draw[<-] (0,0.4) arc(180:0:1.5cm and 1cm);
\draw[<-] (0,0.4) arc(180:0:2cm and 1.3cm);
\draw[<-] (0,0.4) arc(180:0:2.5cm and 1.6cm);

\draw[<-] (1,-0.3) arc(180:360:1cm and 0.5cm);
\draw[<-] (1,-0.3) arc(180:360:1.5cm and 0.8cm);
\draw[<-] (1,-0.3) arc(180:360:2cm and 1.1cm);
\end{tikzpicture}~\\~\\

\noindent The chain $(\Delta_n)_{n \geq 0}$ is clearly positive recurrent and it is not difficult to calculate its invariant probability measure $\Pi = \left( \Pi(n) \right)_{n \geq 0}$. From the recurrence relation 
\[
\Pi(n) = \frac{1}{2 \kappa} \left( \Pi(n-1) + \Pi(n+1) \right) \quad \textnormal{for}~n \geq 2
\]
\noindent we deduce the representation
\begin{align*}
\Pi(n) = c_1 \left( \kappa - \sqrt{ \kappa^2 - 1 } \right)^{n-1} \quad \textnormal{for~all~} n \geq 1,
\end{align*}
where $c_1 \in (0,\infty)$ is a fixed constant. Using the equation
\begin{align*}
\Pi(0) &= \frac{N}{\# V} + \frac{\Pi(1)}{2 \kappa} = \frac{N}{\# V} + \frac{ c_1}{2 \kappa },
\end{align*}
as well as \medskip
\begin{align}
\Pi(0) = 1  - \sum\limits_{n \geq 1} \Pi(n) = 1 - c_1 \sum\limits_{n=0}^\infty \left( \kappa - \sqrt{\kappa^2-1} \right)^n = 1 - c_1 \tau,
\end{align}
we identify
\begin{align}
c_1 &= \left( 1 - \frac{N}{\# V} \right) \left\{ \tau +  \frac{1}{2 \kappa} \right\}^{-1} =  \frac{nm}{\# V} \left\{ \tau +  \frac{1}{2 \kappa} \right\}^{-1}.
\end{align}
Observe that the transitions of $(\Delta_n)_{n \geq 0}$ yield information on the growth of the maximal height in $\mathcal{G}$. Each transition from a state $k \neq 1$ to $0$ implies that the maximal height increases by one. The same also holds for all transitions from a state $k$ to $k+1$ and all transitions from $k \neq 2$ to $1$. On the other hand, we know that a transition from $k \geq 3$ to $k-1$ will not increase the maximal height. For the transitions from $1$ to $0$ and the transition from $2$ to $1$ we do not know for sure whether the maximal height increases. However, the conditional probability for such an event is given by $N/(N+n(m-2))$ respectively $(m-2)/(m-1)$, and the events are independent.

By applying Birkhoff's ergodic theorem to the snake chain $(\Delta_n, \Delta_{n+1})_{n \geq 0}$ and using equation (3) we obtain the following expression for $\gamma ( \mathcal{G})$. 
\begin{align*}
\gamma ( \mathcal{G}) &= \# V \Bigg( \Pi(0) + \sum\limits_{n \geq 1} \Pi(n) \frac{n}{\# V} + \sum\limits_{n \geq 2} \Pi(n) \frac{N}{\# V} + \sum\limits_{1 \leq n \neq 2} \Pi(n) \frac{n (m-2)}{ \# V}   \\
& \quad  \quad \quad \quad + \Pi(1) \frac{N+n}{\# V} \frac{N}{N+n} + \Pi(2) \frac{n(m-1)}{\# V} \frac{m-2}{m-1}  \Bigg)\\
&=\# V \left( \Pi(0) + \sum\limits_{n \geq 1} \Pi(n) \frac{N+n(m-1)}{\# V}  \right)  = \# V - n \left( 1 - \Pi(0) \right).
\end{align*}
The claim now follows by inserting (9) and (10). \end{proof}

\begin{example}~\\
\begin{center}
\begin{tabular}{|M{4em}|M{6em}|M{6em}|M{6em}|M{6em}|}
\hline
\multirow{2}{*}{$\mathcal{G}$}& \multirow{2}{*}{\begin{tikzpicture}[baseline=0]
    \fill (0,0.1) circle (2pt);
    \fill (0.5,0.1) circle (2pt);
    \fill (-0.5,0.1) circle (2pt);
    \draw (0,0.1)--(0.5,0.1);
    \draw (0,0.1)--(-0.5,0.1);
  \end{tikzpicture}}&\multirow{2}{*}{\begin{tikzpicture}[baseline=0]
    \fill (-0.25,0.25) circle (2pt);
    \fill (0.25,0.25) circle (2pt);
    \fill (0.25,-0.25) circle (2pt);
    \fill (-0.25,-0.25) circle (2pt);
    \draw (0.25,0.25)--(-0.25,0.25);
    \draw (0.25,0.25)--(0.25,-0.25);
    \draw (-0.25,-0.25)--(-0.25,0.25);
    \draw (-0.25,-0.25)--(0.25,-0.25);
  \end{tikzpicture} }& \multirow{2}{*}{\begin{tikzpicture}[baseline=0]
    \fill (-0.25,0.25) circle (2pt);
    \fill (0.25,0.25) circle (2pt);
    \fill (0.25,-0.25) circle (2pt);
    \fill (-0.25,-0.25) circle (2pt);
    \draw (0.25,0.25)--(-0.25,0.25);
    \draw (0.25,0.25)--(0.25,-0.25);
    \draw (-0.25,-0.25)--(-0.25,0.25);
    \draw (-0.25,-0.25)--(0.25,-0.25);
    \draw (-0.25,-0.25)--(0.25,0.25);
  \end{tikzpicture}}& \multirow{2}{*}{\begin{tikzpicture}[baseline=0]
    \fill (-0.25,0.25) circle (2pt);
    \fill (0.25,0.25) circle (2pt);
    \fill (0.25,-0.25) circle (2pt);
    \fill (-0.25,-0.25) circle (2pt);
    \fill (0,0) circle(2pt);
    \draw (0.25,0.25)--(0,0);
    \draw (0.25,0.25)--(-0.25,0.25);
    \draw (-0.25,-0.25)--(0.25,-0.25);
    \draw (-0.25,-0.25)--(0,0);
    \draw (-0.25,0.25)--(0,0);
    \draw (0.25,-0.25)--(0,0);
  \end{tikzpicture}}\\ &&&&\\ \hline
\multirow{2}{*}{$ \gamma ( \mathcal{G})$}& \multirow{2}{*}{$2 + \frac{1}{\sqrt{5}}$}&\multirow{2}{*}{$2 + \frac{2}{\sqrt{3}}$}&\multirow{2}{*}{$3 + \frac{1}{\sqrt{3}}$}& \multirow{2}{*}{$\frac{11}{3}$}\\ &&&& \\ \hline
\multirow{2}{*}{$ (N,n,m)$}& \multirow{2}{*}{$(1,1,2)$}&\multirow{2}{*}{$(0,1,4)$}&\multirow{2}{*}{$(2,1,2)$}& \multirow{2}{*}{$(1,2,2)$}\\ &&&& \\ \hline 
\end{tabular}~\\~\\~\\~\\

\begin{tabular}{|M{4em}|M{6em}|M{6em}|M{6em}|M{6em}|}
\hline
\multirow{4}{*}{$\mathcal{G}$}& \multirow{4}{*}{\begin{tikzpicture}[baseline=0]
	\fill (-0.5,-0.3) circle (2pt);
	\fill (-0.5,0.3) circle (2pt);	
	\fill (0,0.6) circle (2pt);	
	\fill (0,-0.6) circle (2pt);
	\fill (0.5,-0.3) circle (2pt);
	\fill (0.5,0.3) circle (2pt);
	\draw (-0.5,-0.3)--(0,-0.6)--(0.5,-0.3)--(0.5,0.3)--(0,0.6)--(-0.5,0.3)--(-0.5,-0.3);	
	\draw (-0.5,-0.3)--(0.5,-0.3)--(0,0.6)--(-0.5,-0.3);
	\draw (-0.5,0.3)--(0.5,0.3)--(0,-0.6)--(-0.5,0.3);
  \end{tikzpicture}}&\multirow{4}{*}{\begin{tikzpicture}[baseline=0]
    \fill (-0.5,0.5) circle (2pt);
    \fill (0.5,0.5) circle (2pt);
    \fill (0.5,-0.5) circle (2pt);
    \fill (-0.5,-0.5) circle (2pt);
    \fill (0,0) circle (2pt);
    \draw (0,0)--(0.5,0.5)--(-0.5,0.5)--(0,0);
    \draw (0.5,0.5)--(0.5,-0.5)--(0,0);
    \draw (0,0)--(-0.5,-0.5)--(-0.5,0.5);
    \draw (-0.5,-0.5)--(0.5,-0.5);
  \end{tikzpicture} }& \multirow{4}{*}{\begin{tikzpicture}[baseline=0]
    \fill (-0.5,0.5) circle (2pt);
    \fill (0.5,0.5) circle (2pt);
    \fill (0.5,-0.5) circle (2pt);
    \fill (-0.5,-0.5) circle (2pt);
    \fill (0.5,0) circle (2pt);
    \fill (-0.5,0) circle (2pt);
    \draw (-0.5,0)--(0.5,0.5)--(-0.5,0.5)--(0.5,0);
    \draw (0.5,0.5)--(0.5,-0.5)--(0.5,0);
    \draw (-0.5,0)--(-0.5,-0.5)--(-0.5,0.5);
    \draw (-0.5,-0.5)--(0.5,-0.5)--(-0.5,0);
    \draw (-0.5,-0.5)--(0.5,0);
    \draw (-0.5,0)--(0.5,0);
  \end{tikzpicture}}& \multirow{4}{*}{\begin{tikzpicture}[baseline=0]
    \fill (-0.5,0.5) circle (2pt);
    \fill (0.5,0.5) circle (2pt);
    \fill (0.5,-0.5) circle (2pt);
    \fill (-0.5,-0.5) circle (2pt);
    \fill (0,0.3) circle (2pt);
    \fill (0,-0.3) circle (2pt);
    \fill (0,0) circle(2pt);
    \draw (0.5,0.5)--(0,0);
    \draw (0.5,0.5)--(-0.5,0.5);
    \draw (-0.5,-0.5)--(0.5,-0.5);
    \draw (-0.5,-0.5)--(0,0);
    \draw (-0.5,0.5)--(0,0);
    \draw (0.5,-0.5)--(0,0);
    \draw (0,-0.3)--(0,0.3);
    \draw (-0.5,-0.5)--(0,-0.3)--(0.5,-0.5);
    \draw (-0.5,0.5)--(0,0.3)--(0.5,0.5);
  \end{tikzpicture}}\\ &&&&\\ &&&&\\ &&&&\\ \hline
\multirow{2}{*}{$ \gamma ( \mathcal{G})$}& \multirow{2}{*}{$3+ \frac{3}{\sqrt{2}}  $}&\multirow{2}{*}{$3 + \frac{2 \sqrt{21}}{7}  $}&\multirow{2}{*}{$4 + \frac{2}{\sqrt{5}} $}& \multirow{2}{*}{$4 + \frac{3}{\sqrt{13}}$}\\ &&&& \\ \hline 
\multirow{2}{*}{$ (N,n,m)$}& \multirow{2}{*}{$(0,1,6)$}&\multirow{2}{*}{$(1,1,4)$}&\multirow{2}{*}{$(2,2,2)$}& \multirow{2}{*}{$(1,3,2)$}\\ &&&& \\ \hline
\end{tabular}
\end{center}
\end{example}

\section{A Central Limit Theorem around $\gamma ( \mathcal{G})$}

In order to state the main result of this section let us introduce some notation. We will write $ Z_n \Longrightarrow Z$ for $n \rightarrow \infty$ to denote convergence in distribution. For $\sigma^2 \in [0,\infty)$ we denote by $N(0, \sigma^2)$ the centred normal distribution with variance $\sigma^2$. In case of $\sigma^2 = 0$ we identify the normal distribution with a Dirac measure. 

\begin{theorem}
Fix $\mathcal{G} = (V,E)$. Then there is a $\sigma^2 = \sigma^2 ( \mathcal{G}) \in [0,\infty)$, so that \[ \frac{ \max_{x \in V} H_{x,n } - n~ \frac{\gamma ( \mathcal{G} )}{\# V} }{ n^{1/2} } \Longrightarrow N(0, \sigma^2) \quad \textnormal{for}~ n \rightarrow \infty.  \]
The same central limit theorem with the same constant $\sigma^2$ also holds, if we replace $\max_{x \in V} H_{x,n}$ by $\min_{x \in V} H_{x,n}$. Moreover, 
\[ \sigma^2 ( \mathcal{G} ) = 0 \quad \textnormal{if~and~only~if} \quad \mathcal{G}~ \textnormal{is~isomorphic~to~} \mathcal{K}_{\# V}.   \]
\end{theorem}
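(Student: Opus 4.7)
My plan is to express $\max_{x \in V} H_{x,n}$ as an additive functional of a positive recurrent Markov chain on a countable state space, and then apply a classical central limit theorem for such functionals. The starting point is that the deposition rule (1) is invariant under adding a common constant to every coordinate of $h$, so the \emph{shape process}
\[
\eta_n := H_n - \bigl(\min_{y \in V} H_{y,n}\bigr)\,\mathbf{1}
\]
is a time-homogeneous Markov chain on the countable space $\mathcal{S} := \{\eta \in \mathbb{N}_0^V : \min_x \eta(x) = 0\}$. Moreover, the increments $\Delta_n^{\max} := \max_x H_{x,n+1} - \max_x H_{x,n} \in \{0,1\}$ and $\Delta_n^{\min} := \min_x H_{x,n+1} - \min_x H_{x,n}$ are bounded measurable functions of $\eta_n$ and the uniformly chosen vertex $Z_{n+1}$. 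In particular, $\max_{x\in V} H_{x,n} = \sum_{k=0}^{n-1}\Delta_k^{\max}$ is an additive functional of the enlarged Markov chain $(\eta_n,Z_{n+1})$.

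The main technical obstacle is to establish positive recurrence of $(\eta_n)$ to a reference state $\eta^*$ with finite second moment of the return time $\tau$, which is non-trivial because the state space $\mathcal{S}$ is infinite. I would carry this out via a Lyapunov-drift / coupling argument: show that whenever $\max \eta_n$ is large, there is a uniform negative drift in $\max \eta$, because a single update at a deep ``valley'' vertex $z$ sets its height to $1+\max_{y\in[z]}H_y$ and therefore closes the gap in one step with probability bounded below by $1/\# V$; combined with irreducibility this yields geometric return times. Once positive recurrence is in place, the regenerative central limit theorem gives
\[
\frac{\max_{x \in V} H_{x,n} - n \mu}{\sqrt{n}} \Longrightarrow N(0, \sigma^2), \qquad \mu := \frac{\gamma(\mathcal{G})}{\# V}, \qquad \sigma^2 = \frac{\mathrm{Var}_{\eta^*}(\mathcal{X} - \mu \tau)}{\mathbb{E}_{\eta^*}[\tau]}
\]
with $\mathcal{X} := \sum_{k=0}^{\tau-1} \Delta_k^{\max}$. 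Since $\max H_{x,n} - \min H_{x,n} = \max \eta_n$ is $\pi$-integrable (by positive recurrence and the drift bound), the analogous convergence holds for $\min_{x\in V}H_{x,n}$ with the same constant $\sigma^2$, the two normalised quantities differing by a $o(\sqrt n)$-tight term.

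For the characterization of $\sigma^2=0$, the forward direction is immediate: in $\mathcal{K}_{\# V}$ every vertex lies in every closed neighbourhood, so $\Delta_k^{\max} \equiv 1$, giving $\max H_{x,n} = n$ deterministically and $\sigma^2 = 0$. Conversely, assume $\mathcal{G} \not\cong \mathcal{K}_{\# V}$ and fix $x,y\in V$ with $\{x,y\}\notin E$. The shapes where $x$ is the unique maximum vertex are reachable from $\eta^*$ and hence carry positive $\pi$-mass; picking $y$ next gives $\Delta^{\max}=0$, so $\mu<1$. I would then exhibit a finite excursion of $(\eta_n)$ from $\eta^*$ back to $\eta^*$ consisting entirely of productive steps (for instance by repeatedly selecting a vertex adjacent to the current maximum and returning the profile to its shape at time $0$), yielding $\mathcal{X}/\tau=1$ with positive probability. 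The two facts $\mu<1$ and $\{\mathcal{X}=\tau\}$ being a positive-probability event are incompatible with the almost sure identity $\mathcal{X}=\mu\tau$, so $\mathrm{Var}_{\eta^*}(\mathcal{X}-\mu\tau)>0$ and therefore $\sigma^2>0$.
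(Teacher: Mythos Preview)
Your overall strategy matches the paper's: both reduce to the shape chain $\eta_n = H_n - (\min_y H_{y,n})\mathbf{1}$, establish positive recurrence with finite second return-time moments, apply a regenerative CLT (the paper invokes Anscombe's theorem), and transfer from $\max$ to $\min$ via tightness of $\max_x \eta_{x,n}$.

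There is, however, a concrete gap in your drift argument. The claim that updating a valley vertex ``closes the gap in one step'' is false: updating $z$ only raises $h_z$ to $1+\max_{y\in[z]}h_y$, which need not be near the global maximum, and the new global minimum is $\min_{x\neq z}h_x$, which may still be far below. Worse, there are states from which $\max_x\eta_{x,n}$ \emph{increases under every possible update}. For the star $\mathcal{S}_n$ ($n\geq 3$) with centre $c$, take $\eta=(h_c,0,\ldots,0)$ with $h_c\geq 1$: updating any leaf creates a new maximum $h_c+1$ while the minimum stays at $0$, and updating $c$ does the same. Hence $V(\eta)=\max_x\eta_x$ does not have uniform negative drift outside a bounded set, and your Lyapunov sketch as stated does not go through. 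The paper supplies the missing mechanism (its Lemma~1): from \emph{any} state, updating the vertices along a non-decreasing permutation $(x_1,\ldots,x_{\#V})$ rooted at the current maximum forces each $x_i$ in turn to become the new maximum, so after $\#V$ steps the entire profile lies in a band of width $\#V$. This ``resetting'' yields a uniform Doeblin minorisation and hence geometric return times; it is precisely the idea your sketch is lacking.

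On the last part, your argument that $\mu<1$ (reach a state where a fixed $x$ is the unique maximum, then update a non-adjacent $y$ to get $\Delta^{\max}=0$) is actually cleaner than the paper's route, which invokes a separate strict-monotonicity result $\gamma(\mathcal{G})<\gamma(\mathcal{K}_{\#V})=\#V$. By contrast, your plan to exhibit an excursion with $\mathcal{X}/\tau=1$ is delicate: chaining productive steps is easy, but returning to a \emph{specific} reference state $\eta^*$ may unavoidably require a non-productive step, and you give no argument that it does not. The paper sidesteps this by comparing two excursions from $h$ back to $h$ that share a common tail (reset into a bounded set, then a fixed path back to $h$) but differ by one extra initial update at the current maximum; their functional values differ by exactly $1-\mu>0$, which forces positive variance without ever needing an all-productive cycle.
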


\noindent All results of this chapter rely on the surface process $\delta = ( \delta_{n})_{n \geq 0}$ defined by
\begin{equation}
\delta_{n} :=  \left( \delta_{x,n} \right)_{x \in V}, \quad \quad \delta_{x,n} :=    H_{x,n} - \min\limits_{y \in V} H_{y,n}. 
\end{equation}
The process $(\delta_n)_{n \geq 0}$ is a time-homogeneous Markov chain, which can be seen as follows. Define an equivalence relation on the state space of $(H_n)_{n \geq 0}$ by identifying two different height vectors if and only if the height difference is the same for all vertices. Then the transition probabilities of $(H_n)_{n \geq 0}$ from one equivalence class to another do not depend on the representative of the former one. This induces a Markov chain on the set of equivalence classes, which can be identified with $(\delta_n)_{n \geq 0}$ after describing each equivalence class by its unique normalized representative.

\noindent By modifying the start height $H_0$ respectively $\delta_0$, we can ensure that the Markov chain $(\delta_n)_{n \geq 0}$ becomes irreducible with state space
\begin{equation*}
S := \left\{ (h_x)_{x \in V} \in \mathbb{N}_0^V~\vert~h_x = 0~\textnormal{for~a~} x \in V~\textnormal{and}~h_y \neq h_z~ \textnormal{for}~\{y,z \} \in E  \right\}.
\end{equation*}
The transition probabilities of $(\delta_n)_{n \geq 0}$ can be described as follows. Fix $h = (h_x)_{x \in V} \in S$ and $y \in V$ and set $m_y := \min \{ h_x~\vert~x \neq y\}$. Then we have 
\begin{align}
\mathbb{P} \left[ \delta_{n+1} = \tilde{h}~\vert~ \delta_n = h \right] = \frac{1}{\# V}, \quad \tilde{h}_x := \left\{ \begin{array}{l l}
h_x - m_y,&x \neq y\\ \\
1 + \max\limits_{z \in [y]} h_z - m_y,& x=y
\end{array} \right. .
\end{align}
Observe that for the transition from $h$ to $\tilde{h}$ there is a unique vertex $x \in V$ with $\tilde{h}_x > h_x$, and this vertex is given by $x=y$. In particular, given $h$, the state $\tilde{h}$ defined in (12) is uniquely determined by the choice of $y \in V$, and vice versa, and all non-zero transition probabilities of $(\delta_n)_{n \geq 0}$ can be described as in (12).

We will prove Theorem 2 by applying renewal arguments to $(\delta_n)_{n \geq 0}$ and using a random index central limit theorem. For fixed $h \in S$ we define the sequence of hitting times of $h$ by 
\[ \tau^h_1 := \inf \{ n \geq 0 ~\vert~ \delta_n = h \}, \quad \quad \tau_{k+1}^{h} := \inf \{ n > \tau_k^h~\vert~ \delta_n = h \}.    \]
Before starting to analyse the Markov chain $(\delta_n)_{n \geq 0}$ formally, let us briefly mention a simple but rather important observation. Let $h \in S$ and $(x_1,\ldots,x_{\# V} )$ be a non-decreasing permutation of $V$. Further assume that $h_{x_1} = \max_{y \in V} h_y$. Then, if the event $\{ \delta_n = h \}$ occurs, and in the following steps the height of the vertices $x_1,\ldots,x_{\# V}$ are increased one after the other exactly one time per vertex, this implies $\{ \delta_{n + \# V} \in S_0 \}$, where ${S}_0 \subseteq S$ denotes the set of all $h' \in S$ with $\max_{x \in V} h'_x \leq \# V$. We therefore say that $(x_1,\ldots,x_{\# V})$ resets $h$.

\begin{lemma}
Fix $\mathcal{G} = (V,E)$. Then, for all $h \in S$, there is a $n \in \mathbb{N}$ such that 
\begin{equation}
\mathbb{P} \left[ \delta_{n} = h~|~\delta_0 = \tilde{h} \right] \geq (\# V)^{-n} \quad \textnormal{for~all~} \tilde{h} \in S.
\end{equation}
In particular, each random variable $\tau_1^{h}$, $h \in S$, has an exponential moment, which is finite for any initial distribution on $S$. In particular, the Markov chain $(\delta_n)_{n \geq 0}$ is positive recurrent and has a stationary solution $\pi$.
\end{lemma}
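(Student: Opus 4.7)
The crux of the lemma is the uniform minorization (13); once it is established, everything else is standard renewal theory. My plan is to combine the ``reset'' observation stated just above the lemma with a pigeonhole on the finite auxiliary set $S_0 := \{h' \in S : \max_x h'_x \leq \# V\}$ defined in the preceding paragraph. Given any $\tilde h \in S$, I would pick a vertex $x^* \in V$ with $\tilde h_{x^*} = \max_y \tilde h_y$ and a non-decreasing permutation $(x_1, \ldots, x_{\# V})$ of $V$ with $x_1 = x^*$. By the reset observation, the deterministic sequence that selects $x_1, \ldots, x_{\# V}$ during the next $\# V$ steps forces $\delta_{\# V} \in S_0$; this specific sequence occurs with probability exactly $(\# V)^{-\# V}$, uniformly in $\tilde h$.

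Next, since $S_0$ is finite and $(\delta_n)_{n \geq 0}$ is irreducible on $S$, for each $h' \in S_0$ there exists a specific vertex-selection sequence of some length $k_{h'}$ driving the chain from $h'$ to $h$, hence with probability $(\# V)^{-k_{h'}}$. To patch this together with the reset into a single integer $n$ that works uniformly in $\tilde h$, I would equalize path lengths by padding with self-returns at $h$: by irreducibility the return-time set $R(h) := \{\ell \geq 1 : \mathbb{P}[\delta_\ell = h \mid \delta_0 = h] > 0\}$ is a nonempty subsemigroup of $\mathbb{N}$, and provided $h$ is aperiodic (or, more generally, provided the reset images in $S_0$ all lie in a single periodicity class), every sufficiently large $m$ belongs to $\{k : \mathbb{P}[\delta_k = h \mid \delta_0 = h'] > 0\}$ simultaneously for every $h' \in S_0$. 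Concatenating the $\# V$-step reset with a padded path of length $n - \# V$ then produces a specific $n$-step vertex-selection sequence of probability $(\# V)^{-n}$ driving the chain from any $\tilde h$ to $h$, which is (13).

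Given (13) with $p := (\# V)^{-n}$, the Markov property implies that in each disjoint block of $n$ consecutive steps the chain ends at $h$ with probability at least $p$, independently of the past. Therefore $\tau_1^h$ is stochastically dominated by $n \cdot G$ for $G \sim \mathrm{Geom}(p)$, which yields $\mathbb{E}[\exp(\alpha \tau_1^h)] < \infty$ for small $\alpha > 0$ under any initial distribution on $S$. Positive recurrence and existence of a stationary solution $\pi$ then follow from standard Markov-chain theory via the finiteness of $\mathbb{E}[\tau_1^h]$ and irreducibility.

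The only genuinely delicate part is the padding step: verifying aperiodicity at $h$, or else carefully tracking periodicity classes so that the reset procedure is compatible with them. For this specific deposition chain this should be easy in practice by exhibiting two short return cycles at $h$ of coprime lengths via explicit vertex sequences, but it is the only place where the particular dynamics of $(\delta_n)$ really enter the argument.
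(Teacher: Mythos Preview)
Your outline is sound and matches the paper's proof in its skeleton: reset into the finite set $S_0$, then use irreducibility and finiteness of $S_0$ to reach the target $h$, and finally equalize path lengths so that a single $n$ works uniformly in $\tilde h$. The one substantive difference is \emph{where} the padding is done.

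You pad \emph{after} arriving at $h$, using return loops at $h$; this is why you need aperiodicity. The paper instead pads \emph{before} the reset: having chosen $x_1$ with $\tilde h_{x_1}=\max_y \tilde h_y$, it first selects $x_1$ an extra $m$ times, and only then performs the reset along $(x_1,\dots,x_{\#V})$. The point is that incrementing the current maximum vertex repeatedly merely shifts the whole post-reset configuration by a constant, so the normalized landing state $h'\in S_0$ is \emph{unchanged} regardless of $m$. Thus for every $\tilde h$ there is a path to $h$ of length $m+\#V+n(h')$ for any $m\ge 0$, and taking $m=n_0-n(h')$ gives the common length $n=n_0+\#V$. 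No periodicity considerations enter at all.

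Your approach still goes through, and in fact the paper's trick hands you aperiodicity for free: from any $h$, resetting in $\#V$ steps and resetting after one preliminary $x_1$-increment in $\#V+1$ steps land in the \emph{same} $h'\in S_0$, and following a fixed path $h'\to h$ of length $k$ then yields return cycles of the consecutive lengths $\#V+k$ and $\#V+k+1$. So the ``delicate part'' you flag is genuinely easy here, but the paper's pre-reset padding simply bypasses it. The deduction of exponential moments and positive recurrence from the uniform minorization is exactly as you state.
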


\begin{proof}
Fix $h \in S$. Since $S_0$ is a finite subset of $S$, there is a $n_0 \in \mathbb{N}$ such that for all $h' \in S_0$ the chain $(\delta_n)_{n \geq 0}$ can go from $h'$ to $h$ in $n(h') \leq n_0$ steps. Set $n := n_0 + \# V$. Let $\tilde{h} \in S$ be given. Then fix a $x_1 \in V$ with $h_{x_1} = \max_{y \in V} h_y$ and a non-decreasing permutation $(x_1,\ldots,x_{\# V})$ of $V$. Denote by $h' \in S_0$ the unique element which arises when $\tilde{h}$ is reset according to $(x_1,\ldots,x_{\# V})$ and $m:= n_0 - n(h') \in \mathbb{N}_0$. Now assume that $\delta_0 = \tilde{h}$ and in the first $m$ step only the height of $x_1$ increases. Then we arrive at a new state, which again can be reset according to $(x_1,\ldots,x_{\# V})$, and, by construction, we therefore can go from $\tilde{h}$ to $h'$ in $m + \# V$ steps. By assumption the chain $(\delta_n)_{n \geq 0}$ may go from $h'$ to $h$ in $n(h')$ steps, and this completes the proof. \qedhere
\end{proof}

The following Lemma ensures that the Markov chain $(\delta_n)_{n \geq 0}$ contains enough information about the growth of the process $(H_n)_{n \geq 0}$.

\begin{lemma}
There are function $g_1: S \times S \rightarrow \mathbb{N}_0^V$, $g_2: S \times S \rightarrow \{0,1\}$ with
\begin{align*}
g_1(\delta_n,\delta_{n+1}) &= H_{n+1} - H_n \quad \textnormal{and}\\
g_2 ( \delta_n, \delta_{n+1}) &= \max_{x \in V} H_{x,n+1} - \max_{y \in V} H_{y,n}~~\textnormal{almost~surely}.
\end{align*}
\end{lemma}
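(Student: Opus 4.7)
The plan is to recover both $H_{n+1}-H_n$ and $\max_x H_{x,n+1}-\max_x H_{x,n}$ from the pair $(\delta_n,\delta_{n+1})$ by first reading off the chosen vertex $y$, and then exploiting the fact that every quantity we need depends only on height differences, which survive the normalisation $h\mapsto h-\min_x h_x$. Both $g_1$ and $g_2$ will be defined by explicit formulas on those pairs in $S\times S$ that arise as actual transitions, and set to $0$ elsewhere, so that the claimed equalities hold on every realisation of the chain.

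First I would use the remark recorded directly after equation (12): in the transition $\delta_n\to\delta_{n+1}$, a unique coordinate strictly increases, and that coordinate is precisely the vertex $y$ chosen at time $n+1$. Hence $y=y(\delta_n,\delta_{n+1})$ is a deterministic function of the two consecutive states. To construct $g_1$, I would then observe that the deposition rule makes $H_{n+1}-H_n$ supported on $\{y\}$ with value $1+\max_{z\in[y]}H_{z,n}-H_{y,n}$ at $y$. The additive constant $\min_w H_{w,n}$ cancels in this difference, giving $1+\max_{z\in[y]}\delta_{z,n}-\delta_{y,n}$, a function of $\delta_n$ and $y$ alone; this defines $g_1$.

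For $g_2$ I would first check that the maximal height increases by either $0$ or $1$ at each step: it is non-decreasing because heights never decrease, and bounded above by $1$ because $H_{y,n+1}=1+\max_{z\in[y]}H_{z,n}\leq 1+\max_x H_{x,n}$ while the other coordinates are unchanged. Then the increment equals $1$ iff $H_{y,n+1}>\max_x H_{x,n}$ iff the closed neighbourhood $[y]$ contains a vertex attaining $\max_x H_{x,n}$. Since the set of maximisers of $H_{\cdot,n}$ coincides with the set of maximisers of $\delta_{\cdot,n}$, this condition depends only on $\delta_n$ and $y$, hence on $(\delta_n,\delta_{n+1})$, which yields $g_2$. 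I do not expect a genuine obstacle here: the only conceptual ingredient is the cancellation of the common shift $\min_w H_{w,n}$ in both the chosen-vertex increment and in the set of maximisers, and everything else is bookkeeping.
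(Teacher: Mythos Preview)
Your proposal is correct and follows essentially the same approach as the paper: both identify the chosen vertex $y$ as the unique coordinate that increases in the transition $\delta_n\to\delta_{n+1}$, define $g_1$ via the formula $1+\max_{z\in[y]}h_z-h_y$ at $y$ and $0$ elsewhere, and reduce $g_2$ to the condition that $[y]$ meets the set of maximisers of $h$. The only cosmetic difference is that the paper packages $g_2$ as a sum of $(g_1)_x$ weighted by the indicator $\mathbf{1}\{\max_{x'\in[x]}h_{x'}=\max_z h_z\}$, whereas you argue directly that the maximal height increases by $0$ or $1$ and characterise the two cases; both rest on the same observation that the relevant conditions are shift-invariant and hence depend only on $\delta_n$.
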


\begin{proof}
The main step is to define $g_1(h,\tilde{h})$, since given $g_1$ we can construct $g_2$ for example by the formula
\[
g_2 (h,\tilde{h}):= \sum\limits_{x \in V} \left( g_1 (h,\tilde{h}) \right)_x~\textbf{1}_{\max\limits_{x' \in [x]} h_{x'} = \max\limits_{z \in V} h_z}.
\]
For the definition of $g_1$ recall the description of the transition probabilities of $(\delta_n)_{n \geq 0}$ given above in formula (12). Each transition of $(\delta_n)_{n \geq 0}$ corresponds to an increase of the value of one uniquely vertex, and clearly this also holds for all transitions of our deposition process $(H_n)_{n \geq 0}$. By recalling our definition of $(\delta_n)_{n \geq 0}$ in (11) it is clear that these two vertices are always the same. Now let $h$, $ \tilde{h} \in S$ and $y \in V$ be given as in (12). Then we can define $g_1$ by
\begin{equation*}
\left( g_1(h,\tilde{h}) \right)_x:= \left\{ \begin{array}{l l}
1 + \max\limits_{z \in [x]} h_z - h_x,&x = y\\ \\
0,&x \neq y.
\end{array} \right. \qedhere
\end{equation*}
\end{proof}

\begin{example}
Let $\mathcal{G} = \mathcal{S}_3$. Then we can simplify the structure of the Markov chain $(\delta_n)_{n \geq 0}$ by stepwise performing the following manipulations.\\
\begin{tabular}{l l}
(i)&Always identify $(h_1,h_2,h_3)$ and $(h_3,h_2,h_1)$ with each other.\\ \\
(ii)&Always identify $(h_1,0,h_3)$ with $(h'_1,0,h'_3)$ if $\vert h_1-h_3 \vert = \vert h'_1 - h'_3 \vert$.\\ \\
(iii)&Always identify $(0,h_2,h_3)$ and $(0,h'_2,h'_3)$ if $h_2 - h_3 = h'_2 - h'_3$.\\ \\
(iv)&Replace all states of the form $(0,h_2,h_3)$ with $h_2 > h_3$ by a new single\\
&state $z$. If the chain is in $z$, it will stay in $z$ with probability $1/3$.\\ \\
(v)&Identify $(h_1,0,h_3)$ with $(0,h'_2,h'_3)$ if $h'_3 - h'_2 = |h_1-h_3|$.\\~\\
\end{tabular}

\noindent After these simplifications we arrive at a Markov chain, whose state space and transition probabilities are illustrated in the following picture.

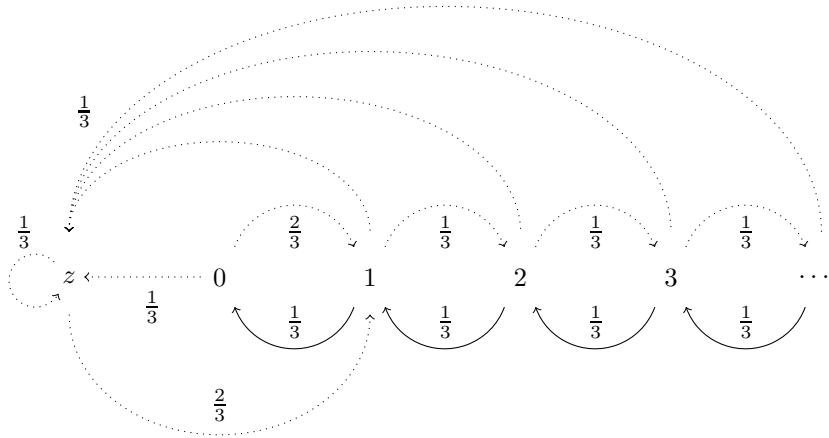
\begin{figure}[!h]
\begin{center}
\begin{tikzpicture}[scale=2]
\node (0) at (0,0) {$z$};
\node (1) at (1,0) {$0$};
\node (2) at (2,0) {$1$};
\node (3) at (3,0) {$2$};
\node (4) at (4,0) {$3$};
\node (...) at (4.95,0) {$\ldots$};
\draw [dotted,->] (1.1,0.2) arc (160:20:12pt);
\draw [dotted,->] (2.1,0.2) arc (160:20:12pt);
\draw [dotted,->] (3.1,0.2) arc (160:20:12pt);
\draw [dotted,->] (4.1,0.2) arc (160:20:12pt);

\draw [<-] (1.1,-0.2) arc (200:340:12pt);
\draw [<-] (2.1,-0.2) arc (200:340:12pt);
\draw [<-] (3.1,-0.2) arc (200:340:12pt);
\draw [<-] (4.1,-0.2) arc (200:340:12pt);
\draw [dotted,->] (-0.1,0.1) arc (45:330:5pt);

\draw[<-,dotted] (0,0.3) arc(180:0:1cm and 0.6cm);
\draw[<-,dotted] (0,0.3) arc(180:0:1.5cm and 0.9cm);
\draw[<-,dotted] (0,0.3) arc(180:0:2cm and 1.2cm);
\draw[<-,dotted] (0,0.3) arc(180:0:2.5cm and 1.5cm);

\node at (-0.3,0.3) {$\frac{1}{3}$};
\node at (1.5,0.3) {$\frac{2}{3}$};
\node at (2.5,0.3) {$\frac{1}{3}$};
\node at (3.5,0.3) {$\frac{1}{3}$};
\node at (4.5,0.3) {$\frac{1}{3}$};
\node at (0.55,-0.2) {$\frac{1}{3}$};
\node at (1.5,-0.3) {$\frac{1}{3}$};
\node at (2.5,-0.3) {$\frac{1}{3}$};
\node at (3.5,-0.3) {$\frac{1}{3}$};
\node at (4.5,-0.3) {$\frac{1}{3}$};
\node at (0.1,1.1) {$\frac{1}{3}$};

\node at (1.0,-0.85) {$\frac{2}{3}$};

\draw [dotted,->] (0,-0.25) arc (180:360:1cm and 0.8cm);

\draw [dotted, <-] (0.1,0) -- (0.9,0);
\end{tikzpicture}
\end{center}
\caption{Dotted arrows indicate that the maximal height grows by $1$ for each transition. Transitions along solid lines do not increase the maximal height.}
\end{figure}
\end{example}

\newpage

Our last ingredient for the proof of Theorem 2 is a rather simple inequality. For its proof we use the counterpart of the process $(\delta_n)_{n \geq 0}$ in our continuous time deposition process. We define
\[ \tilde{\delta}_t := (\tilde{\delta}_{x,t})_{x \in V}, \quad \quad \tilde{\delta}_{x,t} := \tilde{H}_{x,t} - \min_{y \in V} \tilde{H}_{y,t}.   \]
Roughly speaking, all previously mentioned arguments and results for $(\delta_n)_{n \geq 0}$ also hold for $(\tilde{\delta}_t)_{t \geq 0}$ with only minor changes.

\begin{proposition}
Let $\mathcal{G} = (V,E )$, $\mathcal{G}' = (V',E')$ be given graphs and assume that $\mathcal{G}$ is a subgraph of $\mathcal{G}'$. Then $\gamma ( \mathcal{G} ) \leq \gamma ( \mathcal{G}')$ and
\[ \gamma ( \mathcal{G}) = \gamma ( \mathcal{G}') \quad \textnormal{if~and~only~if} \quad \mathcal{G} = \mathcal{G}'.   \]
\end{proposition}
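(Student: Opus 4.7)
The inequality $\gamma(\mathcal{G}) \leq \gamma(\mathcal{G}')$ will be obtained by a coupling. Using the continuous-time formulation I would assign shared unit-rate Poisson processes $(\xi_x)_{x \in V}$ to the common vertices and independent unit-rate Poisson processes $(\xi'_{x'})_{x' \in V' \setminus V}$ to the extra vertices of $\mathcal{G}'$, using the first family alone to drive $\tilde{H}$ on $\mathcal{G}$ and the full family to drive $\tilde{H}'$ on $\mathcal{G}'$. A straightforward induction on the jump times shows $\tilde{H}_{x,t} \leq \tilde{H}'_{x,t}$ for all $x \in V$ and $t \geq 0$: at a firing of $\xi_x$ with $x \in V$ this follows from $[x]_{\mathcal{G}} \subseteq [x]_{\mathcal{G}'}$ and the inductive hypothesis, while firings of $\xi'_{x'}$ for $x' \in V' \setminus V$ only raise $\tilde{H}'$. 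Passing to maxima over $V \subseteq V'$, dividing by $t$ and invoking (2) yields $\gamma(\mathcal{G}) \leq \gamma(\mathcal{G}')$.

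For the strict part, assume $\mathcal{G} \subsetneq \mathcal{G}'$ and split into the sub-cases where either $V = V'$ with an added edge $\{a,b\} \in E' \setminus E$, or $V \subsetneq V'$ with an added vertex $v^* \in V' \setminus V$ adjacent in $\mathcal{G}'$ to some $w^* \in V'$. By the continuous-time analogue of Lemma 1 applied to $\mathcal{G}'$, the surface process $\tilde{\delta}'$ is positive recurrent with a unique invariant law. Using the resetting construction from the proof of Lemma 1, I would then exhibit a concrete state of $\tilde{\delta}'$ together with a finite sequence of Poisson firings of strictly positive probability that, starting from this state under the coupling, strictly raises $M'_t := \max_{x \in V'} \tilde{H}'_{x,t}$ but leaves $M_t := \max_{x \in V} \tilde{H}_{x,t}$ unchanged. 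The construction arranges for the added neighbour (namely $b$ or $v^*$) to uniquely attain the maximum in the relevant closed $\mathcal{G}'$-neighbourhood while remaining invisible to the $\mathcal{G}$-dynamics, so that the next firing of $\xi_a$ (respectively $\xi'_{v^*}$) produces the desired asymmetric jump.

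By positive recurrence, the coupled process revisits this configuration with positive time density almost surely, and a Birkhoff-type ergodic argument on the coupled continuous-time chain then yields
\[
\liminf_{t \to \infty} \frac{M'_t - M_t}{t} \;>\; 0,
\]
which combined with (2) gives $\gamma(\mathcal{G}') > \gamma(\mathcal{G})$. The main obstacle is the strict inequality: the $\leq$ direction is a routine coupling induction, whereas the strict part requires a careful combinatorial construction isolating a strict gain under the coupling, together with the ergodic bookkeeping to ensure these strict gains dominate the fluctuations in the gap $M'_t - M_t$. In particular, one can check on small examples (e.g.\ $\mathcal{G}$ the path $P_4$, $\mathcal{G}'$ the cycle $\mathcal{C}_4$) that $M'_t - M_t$ is \emph{not} monotone in $t$, so merely exhibiting strict-gain events will not suffice until one averages them against the rarer loss events.
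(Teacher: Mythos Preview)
Your coupling for $\gamma(\mathcal{G}) \le \gamma(\mathcal{G}')$ and your reduction of the strict part to the two elementary cases both match the paper. The divergence is in how the strict gap is actually produced, and here your outline has a genuine hole which you yourself half-identify in the final paragraph.

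You want to apply a Birkhoff argument to ``the coupled chain'' and conclude that the density of strict-gain events forces $\liminf_t (M'_t-M_t)/t>0$. But the increment of $M'_t-M_t$ at a firing of $y\in V$ is a function of \emph{both} surface processes $(\tilde\delta_t,\tilde\delta'_t)$, not of $\tilde\delta'_t$ alone: a loss event occurs precisely when $[y]_{\mathcal{G}}$ meets the $\mathcal{G}$-maximum while $[y]_{\mathcal{G}'}$ misses the $\mathcal{G}'$-maximum, and this depends on where each process separately carries its maximum. So the Markov chain to which Birkhoff must be applied is the joint $(\tilde\delta,\tilde\delta')$. Lemma~1 gives positive recurrence of each factor, not of the pair; you would need a fresh Doeblin/minorisation argument for the coupled chain, and you do not supply one. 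And even granting an invariant law $\pi$ for the pair, exhibiting states with $\Delta(M'-M)=+1$ of positive $\pi$-mass does not yield $\mathbb{E}_\pi[\Delta(M'-M)]>0$ without a quantitative comparison against the $-1$ events, which you also leave open.

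The paper bypasses both difficulties by a sandwiching construction rather than a direct comparison. It watches only $\tilde\delta$ on the \emph{smaller} graph $\mathcal{G}$, fixes a state $h$ with $h_x=\max_y h_y$ at the vertex $x$ touching the new edge/vertex, and defines an intermediate process on $\mathcal{G}'$ that ignores $x'$ except during ``special events'': $\tilde\delta$ hits $h$ and the clocks then ring in the order $x',x_1,\dots,x_{\#V}$ for a fixed non-decreasing permutation with $x_1=x$. Between special events the intermediate process evolves on $V$ by the $\mathcal{G}$-rule, so it stays a \emph{constant} above the $\mathcal{G}$-deposition; each special event bumps that constant by exactly one and the subsequent reset makes the offset uniform across $V$ again. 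Thus the comparison ``intermediate versus $\mathcal{G}$'' has no loss events at all, $M^{\mathrm{int}}_t=M_t+N_t$ with $N_t$ the count of completed special events, and positive recurrence of the \emph{single} chain $\tilde\delta$ already gives $N_t/t\to c>0$. Since the intermediate process is dominated by the true $\mathcal{G}'$-deposition, $\gamma(\mathcal{G}')\ge\gamma(\mathcal{G})+c$. The idea you are missing is exactly this third process whose excess over $\mathcal{G}$ is monotone by design, so that no averaging of gains against losses is ever needed.
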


\begin{proof}
We couple our ballistic deposition processes on $\mathcal{G}$ and $\mathcal{G}'$ in continuous time by assuming that they share the same underlying Poisson processes $(\xi_x)_{x \in V}$. This directly gives $\gamma ( \mathcal{G}) \leq \gamma ( \mathcal{G}')$.

\noindent Now assume $\mathcal{G} \neq \mathcal{G}'$ and let us prove $\gamma ( \mathcal{G} ) < \gamma ( \mathcal{G}')$. For this purpose, note that by induction over $\# V$ it suffices to consider the following two cases.

\noindent \begin{tabular}{l l}
(i)&$V' = V$ and $E' = E \cup \{ \{ x,y \} \}$ for a suitable choice of $x$, $y \in V$.\\ \\
(ii)&$V' = V \cup \{ x' \}$ for a $x' \notin V$ and $E' = E \cup \{ \{ x, x' \} \}$ for a $x \in V$.\\
\end{tabular}\\ \\

\noindent It turns out that both cases can be treated roughly in the same way, and we therefore start and mainly concentrate on the case (ii).

To obtain the claim we construct a new growth model on $\mathcal{G}'$, which evolves asymptotically faster than our deposition model on $\mathcal{G}$ and at most as fast as the deposition process on $\mathcal{G}'$. For this purpose let $\xi_{x'}$ denote the Poisson process related to the vertex $x'$ in the later one. Furthermore, let $(\tilde{\delta}_t)_{t \geq 0}$ denote the time continuous surface process related to the ballistic deposition on the graph $\mathcal{G}$. Further fix a $h \in S$ with $h_x = \max_{y \in V} h_y$ and a non-decreasing permutation $(x_1,\ldots,x_{\# V})$ of $V$ with $x = x_1$.

Our new growth process on $\mathcal{G}'$ arises by modifying our ballistic deposition rule (1). We will take the possible growth events of the vertex $x'$ as well as the influence of $x'$ on its neighbour $x$ only into account, if the current height fluctuations behave in a specific way. More precisely the influence of $x'$ at a point in time is only taken into account if both the Markov chain $(\tilde{\delta}_t)_{t \geq 0}$ is in the state $h$, and then, in the following, the first Poisson process, who jumps, is $\xi_{x'}$, followed by a jump of $\xi_{x_1}$, $\xi_{x_2}$, and so on until $\xi_{x_{\# V}}$ has jumped. After such an event has occurred, we again neglect the possible growth of $x'$ or its influence on the growth of $x=x_1$, until at a later point in time again both $(\tilde{\delta}_t)_{t \geq 0}$ is in $h$ and in the following the Poisson processes behave accordingly.

By definition, it is clear that the height of our new growth process at all times is smaller than in our original ballistic deposition process on $\mathcal{G}'$. This is just a consequence of the fact, that the influence of the vertex $x'$ is always taken into account in our original process.

On the other hand, the maximal height in our new processes always exceeds the maximal height of our ballistic deposition on $\mathcal{G}$, since in this process the vertex $x'$ is always neglected. However, by construction, we know that the maximal height of our new process at a time $t \in (0,\infty)$ is always at least as big as the maximal height in our ballistic deposition on $\mathcal{G}$ plus the number of visits of $(\tilde{\delta}_t)_{t \geq 0}$ in $h$ up to time $t$, which have been followed by the above mentioned behaviour of the underlying Poisson processes. Since $(\tilde{\delta}_t)_{t \geq 0}$ is a positive recurrent Markov chain and irreducible on $S$, Birkhoff's ergodic theorem yields that this second contribution strictly increases the asymptotic growth. This implies $\gamma ( \mathcal{G}) < \gamma ( \mathcal{G}')$ and therefore verifies our claim in case (ii). 

The case (i) can be treated roughly in the same way as (ii). Instead of taking the growth of the vertex $x'$ and its influence on $x$ into account only sometimes, one now has to handle the influence of the edge $\{ x,y\}$ in a similar way. \qedhere
\end{proof}

\begin{proof}[Proof of Theorem 2]
Lemma 2 gives us the representation
\begin{equation}
R_n := \max\limits_{x \in V} H_{x,n} - n \frac{\gamma( \mathcal{G})}{\# V} = \max_{x \in V} H_{x,0} + \sum\limits_{k=1}^n f( \delta_{k-1},\delta_k ), \quad n \geq 0,
\end{equation}
where $f(h,h'):=g_2(h,h') - \frac{\gamma ( \mathcal{G})}{\# V}$. Fix $h \in S$ and assume $H_0 := \delta_0 :=h$. Consider the sequence $(W_n)_{n \geq 1}$ defined by
\[ W_n := f \left( \delta_{\tau^h_{n}}, \delta_{\tau_h^{n}+1} \right) + \cdots + f \left( \delta_{\tau^h_{n+1}-1}, \delta_{\tau^h_{n+1}} \right).  \]
The random variables $(W_n)_{n \geq 1}$ are i.i.d. by construction. Besides, since clearly $- 1 \leq f \leq 1$, Lemma 1 yields

\begin{equation*}
0 \leq \tilde{\sigma}^2 :=  \mathbb{E} \left[ W_1^2 \right] \leq \mathbb{E} \left[ ( \tau_2^h - \tau_1^h)^2 \right] < \infty . \medskip
\end{equation*}

\noindent Set $K_n := \sup \{ k \in \mathbb{N}~|~ \tau^h_k \leq n \}$. Let us prove the following statements. 

\begin{tabular}{l l}
A)&$\displaystyle \frac{1}{n^{1/2}}~R_{\tau^h_{K_n}} \Longrightarrow N(0,\sigma^2)$ for $n \rightarrow \infty$, where $\sigma^2 := \pi(h)~\tilde{\sigma}^2$.\\ \\
B)&$\displaystyle \frac{1}{n^{1/2}}  \left\vert R_n - R_{\tau^h_{K_n}} \right\vert \Longrightarrow 0 $ for $n \rightarrow \infty$.\\ \\
\end{tabular}

Once we have established A) and B), Slutsky's theorem immediately gives
\[  \frac{1}{n^{1/2}} R_n = \frac{1}{n^{1/2}} R_{\tau^h_{K_n}} + \frac{1}{n^{1/2}} \left( R_n - R_{\tau^h_{K_n}} \right) \Longrightarrow N(0,\sigma^2)  \quad \textnormal{for}~ n \rightarrow \infty, \medskip \]
and hence verifies the first claim of Theorem 2.

\noindent In order to prove A), note that by Kac's theorem we know $\tau_n^h \sim \pi(h)^{-1} \cdot n$ for $n \rightarrow \infty$ almost surely and $K_n \sim \pi(h) \cdot n$ for $n \rightarrow \infty$. Moreover, we have 
\[
\frac{1}{n^{1/2}} R_{\tau^h_{K_n}} = \frac{1}{n^{1/2}} \sum\limits_{k=1}^{K_n} W_k.
\]
Taking these observations into account, the statement A) directly follows from Anscombe's theorem, see e.g. Theorem 2.3 in \cite{Gut12}. 

\noindent In order to prove B) consider the estimate
\[ \frac{1}{n^{1/2}} \left| R_n - R_{\tau^h_{K_n}} \right| \leq \frac{1}{n^{1/2}} \sup \left\{ \left\vert R_{k} - R_{\tau^h_{K_n}} \right\vert;~k=\tau^h_{K_{n}}, \tau^h_{K_n} + 1, \ldots,\tau^h_{K_{n}+1} \right\}. \medskip \]
Now we apply the inequality 
\[ \left| R_k - R_l \right| \leq \max \left\{ \frac{\gamma(\mathcal{G})}{\# V}, 1 - \frac{\gamma(\mathcal{G})}{\# V} \right\} \left| k-l \right| \leq |k-l|, \quad k,l \geq 1, \medskip \]
and obtain
\[ \frac{1}{n^{1/2}} \left| R_n - R_{\tau^h_{K_n}} \right| \leq  \frac{1}{n^{1/2}} \left( \tau_{K_n+1}^h - \tau_{K_n}^h \right).    \]
The Markov property of $(\delta_n)_{n \geq 0}$ implies that the sequence $\big( \tau_{K_n + 1}^h - \tau_{K_n}^h \big)_{n \geq 1}$ is i.i.d. and therefore we have verified the claim (B). 

Until now we have verified the central limit theorem for the maximal height for a deterministically chosen initial state of $H_0$ respectively $\delta_0$. The following argument shows that changing the initial distribution will not affect the correctness of the claim. Consider two ballistic deposition processes on $\mathcal{G}$ with different deterministic initial values, and couple them by assuming that with each step the height of the same vertex is increased. Then, by our deposition rule (1), the maximal height difference cannot increase over time. So, if a central limit theorem holds for one process, it also holds for the other one. Clearly, these arguments can be extended to also allow an arbitrary initial distribution on $S$.

\noindent Let us now continue with the second claim of Theorem 2. We start will start by proving it under the assumption $\delta_0 \sim \pi$. Note that
\begin{equation}
\frac{\min\limits_{x \in V} H_{x,n} - n \frac{\gamma(\mathcal{G})}{\# V}}{ n^{1/2} } = \frac{\max\limits_{x \in V} H_{x,n} - n \frac{\gamma(\mathcal{G})}{\# V}}{ n^{1/2} } - \frac{\max\limits_{x \in V} \delta_{x,n} }{n^{1/2}}.
\end{equation}~

Since $\delta_0 \sim \pi$ we know that the distribution of $\max_{x \in V} \delta_{x,n}$ does not depend on $n$. Therefore the claim follows by applying Slutsky's theorem to (15) and using the central limit theorem for the maximal height.

So far we have verified the central limit theorem for the minimal height under the assumption $\delta_0 \sim \pi$. Since $\pi(h) > 0$ for all $h \in S$ we conclude as above that the central limit theorem holds for arbitrary initial distributions.

Now let us finish with the last claim of Theorem 2. If $\mathcal{G}$ is isomorphic to a complete graph, then clearly $\sigma^2 = 0$. Hence we can assume that $\mathcal{G}$ is not isomorphic to $\mathcal{K}_{\# V}$. Recall that we established the representation $\sigma^2 = \tilde{\sigma}^2 \pi(h)$ in A). Therefore, we only need to guarantee that $\tilde{\sigma}^2 = \mathbb{E} [W_1^2] > 0$. 

Fix a $x \in V$ with $h_x = \max_{y \in V} h_y$ and a non-decreasing permutation $(x_1,\ldots,x_{\# V})$ of $V$ with $x_1 = x$. Denote by $\tilde{h} \in S$ the unique state, at which the chain $(\delta_n)_{n \geq 0}$ arrives after starting in $h$ and being reset according to $(x_1,\ldots,x_{\# V})$. Since $(\delta_n)_{n \geq 0}$ is irreducible, there is a finite path along which $(\delta_n)_{n \geq 0}$ may go from $\tilde{h}$ and $h$. Let $h_1,h_2,\ldots,h_N$ denote the path with $h_1 = h_N = h$ which arises by concatenation. Let $M \in \mathbb{N}$ be the number of returns of $(\delta_n)_{n \geq 0}$ to $h$ along this path. Then we know
\[ \mathbb{P} \left[ W_1 + \ldots + W_M = c \right] > 0, \quad \textnormal{where}~ c := \sum\limits_{j=1}^{N-1} f(h_j,h_{j+1}). \] 
If $c \neq 0$, then clearly $\tilde{\sigma}^2 > 0$ and the claim holds true. Therefore, we can assume that $c=0$. Then, we continue with construction of another path $h'_1,\ldots,h'_{N+1}$, along which $(\delta_n)_{n \geq 0}$ can go from $h'_1 := h$ to $h'_{N+1}:=h$. For this purpose let $h'_1:=h_1$ and note that the permutation $(x_1,\ldots,x_{\# V})$ still resets $h'_1$. We define $h'_2,\ldots,h'_{\# V+2}$ by using the resetting event and note that $h'_{\# V + 2} = \tilde{h}$. Then we consider the same path from $\tilde{h}$ to $h$ as before and therefore we can set $h'_{k} := h_{k-1}$ for all $1 \leq k \leq N+1$. Let $M' \in \mathbb{N}$ be the number of returns of $(\delta_n)_{n \geq 0}$ to $h$ along $h'_1,\ldots,h'_{N+1}$. Then, by construction,
\[
\mathbb{P} \left[ W_1 + \cdots + W_{M'} = c' \right]  > 0, \quad \textnormal{where}~~ c' := \sum\limits_{j=1}^{n+1} f(h'_j, h'_{j+1}).
\]
By construction of our route $h'_1, \ldots, h'_{n}$ and our assumption $c=0$ we have
\[
c' = \sum\limits_{j=1}^{n+1} f(h'_j, h'_{j+1}) =  1 - \frac{\gamma ( \mathcal{G})}{\# V} + c  = 1 - \frac{\gamma ( \mathcal{G})}{\# V}.
\]
By Proposition 1 we know $\gamma ( \mathcal{G}) < \gamma ( \mathcal{K}_{\# V} ) = \# V$ and therefore $c' > 0$. Clearly, this implies $\tilde{\sigma}^2 = \mathbb{E} [W_1^2] > 0$ and therefore $\sigma^2 > 0$. \qedhere
\end{proof}

\begin{remark}
There is also a slightly different approach towards Theorem 2. The process $(R_n)_{n \geq 0}$ defined by (14) is stationary under the assumption $\delta_0 \sim \pi$. To obtain the first claim of Theorem 2, one therefore only needs to ensure adequate moment and mixing conditions, compare e.g. Theorem 27.5 in \cite{Bil79}.

Let $Y_n := R_{n+1} - R_n$. Then $\mathbb{E}_\pi [Y_n] = 0$ by Birkhoff's ergodic theorem and $-1 \leq Y_n \leq 1$ almost surely. Moreover, equation (13) verifies a so-called Doeblin condition of the Markov chain $(\delta_n)_{n \geq 0}$, which implies a nice form of geometric ergodicity, see Chapter 2 in \cite{Loc13}. This ergodicity in return implies exponentially fast mixing, see Chapter 1 and Theorem 3.7 in \cite{Bra05}. Alternatively, one can also obtain this result by considering the coefficient of ergodicity of $(\delta_n)_{n \geq 0}$. For more information we refer to Chapter 2 in \cite{Loc13} and Chapter 3 in \cite{FG20}.

As a consequence of the central limit theorem for stationary processes we obtain the representation
\begin{align}
\sigma^2 = \textnormal{Var}_\pi [Y_1^2] + 2 \sum\limits_{k=2}^\infty \textnormal{Cov}_\pi[Y_1 Y_k] \in [0,\infty).
\end{align}
However, it seems rather difficult to prove that $\sigma^2 ( \mathcal{G} ) > 0$ if $\mathcal{G}$ is not isomorphic to a complete graph by only using (16).

In fact the above mentioned moment and mixing conditions do not only imply the classical central limit theorem, but also the functional version of the central limit theorem, compare Corollary 1 in \cite{Her84}, as well as a law of the iterated logarithm, see Theorem 2 and the further comments in \cite{Rio95}.
\end{remark}

\section{A general upper bound for $\gamma ( \mathcal{G})$}

The following result is based on the arguments used by Atar, Athreya and Kang in \cite{AAK01} to derive the upper bound in (4).

\begin{theorem}
Let $\mathcal{G}$ be a given graph. Let $\rho$ be the spectral radius of $A(\mathcal{G}) + \mathbf{1}$, where $\mathbf{1}$ denotes the identity matrix with respect to the index set $V$. Then,
\[ \gamma ( \mathcal{G} ) \leq e \cdot \rho.    \]
\end{theorem}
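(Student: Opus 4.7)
The approach is a first-moment argument on ``growth chains'' combined with a spectral bound on $B := A(\mathcal{G}) + \mathbf{1}$. Note that $B$ is symmetric with nonnegative entries and that $(B^{k-1})_{xy}$ counts sequences $x = z_0, z_1, \ldots, z_{k-1} = y$ with $z_i \in [z_{i+1}]$ for all $i$; since $\rho$ equals the operator norm of the symmetric matrix $B$, Cauchy--Schwarz gives $\mathbf{1}^{T} B^{k-1} \mathbf{1} \leq \#V \cdot \rho^{k-1}$.

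First I would give a combinatorial description of the height in the continuous-time model. Unfolding the rule (1) by induction over the sequence of Poisson arrivals shows that $\tilde{H}_{x,t} \geq k$ if and only if there exist times $0 < t_1 < \cdots < t_k \leq t$ and vertices $y_1, \ldots, y_{k} = x$ with $y_i \in [y_{i+1}]$ for $i < k$, such that $t_i$ is a jump time of the independent Poisson process $\xi_{y_i}$.

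Second, for a \emph{fixed} admissible vertex sequence $(y_1, \ldots, y_k)$, I would bound the probability that such times exist in $[0,t]$. Letting $T_1$ be the first jump time of $\xi_{y_1}$ and, recursively, $T_i$ the first jump time of $\xi_{y_i}$ strictly after $T_{i-1}$, the memorylessness of the independent Poisson family makes the increments $T_i - T_{i-1}$ i.i.d.\ $\mathrm{Exp}(1)$, even when the $y_i$ repeat so that the same process is used at different positions (this is the one slightly subtle point). Hence the existence probability equals $\mathbb{P}[T_k \leq t] = \mathbb{P}[\mathrm{Gamma}(k,1) \leq t] \leq t^k/k!$. Summing over all admissible vertex sequences ending at an arbitrary vertex and using Markov's inequality plus a union bound in $x$, this yields
\[
\mathbb{P}\bigl[\max_{x \in V} \tilde{H}_{x,t} \geq k\bigr] \;\leq\; \frac{t^k \cdot \#V \cdot \rho^{k-1}}{k!}.
\]

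Finally, Stirling's bound $k! \geq (k/e)^k$ rewrites the right-hand side as $(\#V/\rho)(et\rho/k)^k$. For any fixed $\epsilon > 0$, plugging in $k = \lceil (e\rho + \epsilon)\,t \rceil$ forces the right-hand side to decay exponentially in $t$, so a Borel--Cantelli argument along integer times together with the monotonicity of $t \mapsto \tilde{H}_{x,t}$ gives $\limsup_{t \to \infty} t^{-1} \max_{x} \tilde{H}_{x,t} \leq e\rho + \epsilon$ almost surely. Sending $\epsilon \downarrow 0$ proves $\gamma(\mathcal{G}) \leq e\rho$. The main conceptual step is the identification of $\tilde H_{x,t}$ with the longest admissible chain of Poisson events; once that is in hand, the AAK01 computation for $\mathcal{C}_n$ (where the count of admissible sequences is simply $\#V \cdot 3^{k-1}$) generalises to arbitrary $\mathcal{G}$ via the operator-norm inequality $\mathbf{1}^T B^{k-1} \mathbf{1} \leq \#V \cdot \rho^{k-1}$.
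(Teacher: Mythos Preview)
Your proof is correct and follows essentially the same strategy as the paper: identify $\{\max_x \tilde H_{x,t}\ge k\}$ with the existence of an admissible chain of $k$ Poisson events, bound the per-chain probability by the tail of a $\mathrm{Gamma}(k,1)$ variable, and count chains via the spectral radius of $B=A(\mathcal G)+\mathbf 1$. The technical packaging differs slightly---the paper uses the Perron--Frobenius limit $\sqrt[m]{\lVert B^m\rVert}\to\rho$ (hence an auxiliary $\varepsilon$), a Chernoff bound on $\mathbb P[S_m\le am]$, and the inequality $\gamma\le m/\mathbb E[T_m]$, whereas you use the exact operator-norm bound $\mathbf 1^{T}B^{k-1}\mathbf 1\le \#V\,\rho^{k-1}$, the crude estimate $\mathbb P[\mathrm{Gamma}(k,1)\le t]\le t^k/k!$ with Stirling, and Borel--Cantelli---but these are interchangeable and your version is, if anything, a bit cleaner.
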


\begin{proof}
Fix $m \in \mathbb{N}$ and let
\[ T_m := \inf  \left\{ t >0 ~|~ \max\limits_{x \in V} \tilde{H}_{x,t} = m \right\}.  \]
We modify our time continuous deposition process in the following way. At time $T_m$ the height in each vertex is set equal to $m$. Then the process evolves as usual, until the maximal height again hits a multiple of $m$. At this particular time the height of each vertex of the graph is increased, until it is again equal to the maximal height. By continuing this procedure, we arrive at a model, which grows at least as fast as our original process, and by the law of large numbers
\[ \gamma ( \mathcal{G} )  \leq \frac{m}{ \mathbb{E} [T_m]  }.   \]
Applying Markov's inequality, we find for all $a \in (0,\infty)$ 
\[ \mathbb{E} [ T_m ] \geq am \left( 1 - \mathbb{P} [T_m \leq am] \right)   \]
 and therefore
\begin{equation}
\gamma( \mathcal{G}) \leq \frac{1}{a \left( 1 - \mathbb{P} [T_m \leq am] \right) }. 
\end{equation}
Note that $T_m \leq am$ if and only if there are $x_1,\ldots,x_m \in V$ and $0 < t_1 < \ldots < t_m \leq am$ such that $x_{i+1} \in [x_i]$ for all $i=0,\ldots,m-1$ and in each time interval $(t_i,t_{i+1})$ the height of $x_i$ increases strictly. The number of tuples $(x_1,\ldots,x_m)$ satisfying $x_{i+1} \in [x_i]$ for all $i=0,\ldots,m-1$ is given by $\lVert \left( A(\mathcal{G}) + \mathbf{1} \right)^m \rVert$, where the norm $\lVert \cdot \rVert$ is defined as the sum of the absolute value of all entries. Observe that $A(\mathcal{G}) + \mathbf{1}$ is a nonnegative irreducible matrix. Therefore, by the Perron-Frobenius theorem
\[
\rho = \lim\limits_{n \rightarrow \infty} \sqrt[n]{\left\lVert \left( A(\mathcal{G}) +  \mathbf{1} \right)^n \right\rVert}.
\]
We conclude that for all $\varepsilon > 0$ there exists a $m_0 \in \mathbb{N}$, such that for $m \geq m_0$
\[
\mathbb{P} [T_m \leq am] \leq ( \rho + \varepsilon)^m \mathbb{P} \left[ S_m \leq am \right],
\]
where $S_m = \sum_{k=1}^m W_k$ and $(W_n)_{n \geq 1}$ is a sequence of i.i.d. exponentially distributed random variables of unit mean. Using Markov's inequality, we have for all $\lambda \in (0,\infty)$ the estimate
\begin{align*}
\mathbb{P} [S_m \leq am ] &= \mathbb{P}  \left[ \exp ( -\lambda S_m) > \exp(- a \lambda m) \right] \leq \exp( a \lambda m)  \mathbb{E} \left[ \exp(-\lambda W_1)\right]^m\\
&= \exp (  a \lambda m) \left( \frac{1}{1 + \lambda} \right)^m = \exp \left( \left( a \lambda  - \log(1 + \lambda) \right) m  \right).
\end{align*}
Minimizing over $\lambda$ we find the optimal bound for $\lambda = (1-a)/a$ and
\[ \mathbb{P}  [T_m \leq am] \leq \# V \exp \left( m \left(  1 - a + \log(a) + \log( \rho + \varepsilon) \right)  \right). \]
Choose $a:= \frac{1}{e(\rho + \varepsilon)} \in (0,\infty)$. Then $\log(a) = - \log( \rho + \varepsilon) -1$ and therefore
\[
\mathbb{P}  [T_m \leq am] \leq \exp \left( - a m \right) \rightarrow 0 \quad \textnormal{for~} m \rightarrow \infty.
\]
Applying this to equation (17) gives
\[ \gamma ( \mathcal{G}) \leq \frac{1}{a} = e \left( \rho + \varepsilon \right).  \]
The claim now follows by letting $\varepsilon \rightarrow 0$. \qedhere \end{proof}

\begin{remark}
Note that always $\rho \leq \Delta \mathcal{G} + 1$, and equality holds if and only if $\mathcal{G}$ is a regular graph. By considering the case of a complete graph, we immediately see that the upper bound in Theorem 3 is optimal up to a constant. One might ask, whether there exists a sequence of regular graphs $(\mathcal{G}_n)_{n \geq 1}$ satisfying 
\[ \gamma ( \mathcal{G}_n) \sim e~\Delta \mathcal{G}_n \quad \textnormal{as~} n \rightarrow \infty \quad \textnormal{?}
  \]
\end{remark}

We want to give a minor result, which is again proven by studying the continuous time version of our ballistic deposition process.

\begin{proposition}
Let $(\mathcal{G}_n)_{n \geq 0}$ be a sequence of regular graphs with $\Delta \mathcal{G}_n \rightarrow \infty$ for $n \rightarrow \infty$ and \textnormal{girth}$(\mathcal{G}_n) \geq 5$ for all $n \in \mathbb{N}$. Then 
\begin{align*}
\liminf\limits_{n \rightarrow \infty} \frac{\gamma ( \mathcal{G}_n)}{\Delta \mathcal{G}_n} \geq \frac{2e-1}{(e-1)^2} \approx 1.506.
\end{align*}
\end{proposition}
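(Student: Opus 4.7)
Working in continuous time, my plan is to lower bound $\tilde H_{x,t}$ via the last-passage-percolation representation of the deposition: $\tilde H_{x,t}$ equals the length of the longest chain of Poisson ring events $(u_k,t_k)_{k=1}^m$ with $t_1<\dots<t_m\le t$, $u_m=x$, and $u_{k+1}\in[u_k]$ for each $k$. The girth$\,\ge 5$ hypothesis implies that the ball $B_2(x)$ is a rooted tree: $x$, the $\Delta:=\Delta\mathcal G_n$ neighbours $y_1,\dots,y_\Delta$, and the $\Delta-1$ private grandchildren $z_{i,1},\dots,z_{i,\Delta-1}$ of each $y_i$; crucially, the Poisson clocks of the $\Delta$ subtrees rooted at different $y_i$ are mutually independent. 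Restricting to chains that live inside $B_2(x)$ only weakens the lower bound.

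The core of the argument is a Paley--Zygmund-type second-moment lower bound applied to $X_m$, the number of chains of length $m$ ending at $x$ inside $B_2(x)$. The first moment is standard: $\mathbb E[X_m]=(\Delta+1)^{m-1}\,t^m/m!$, which by Stirling only exceeds~$1$ for $m\lesssim e(\Delta+1)t$, matching Theorem~3 and forcing us to control the second moment. I would estimate $\mathbb E[X_m^2]$ by stratifying pairs of chains $(\pi_1,\pi_2)$ by the length $k$ of their common final segment read backwards from $x$. By the tree structure of $B_2(x)$, two chains that diverge at step $k$ use $(\text{vertex},\text{time})$ events on disjoint branches thereafter, so the count of pairs at overlap $k$ is explicit. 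Summing the resulting geometric-type series and evaluating the closed form $\sum_{k\ge 1}(k+1)e^{-k}=\tfrac{2e-1}{(e-1)^2}$ identifies the critical threshold $m^\star=\tfrac{2e-1}{(e-1)^2}\Delta\,t\,(1+o(1))$ below which $\mathbb P(X_m\ge 1)\to 1$ as $\Delta\to\infty$.

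To upgrade the in-probability statement to an almost-sure $\liminf$, I would partition $[0,t]$ into $\lfloor t\rfloor$ disjoint unit windows, apply the second-moment estimate separately on each window using independence of Poisson clocks, and concatenate the resulting sub-chains. For any $\varepsilon>0$, each window produces a sub-chain of length $(1-\varepsilon)\tfrac{2e-1}{(e-1)^2}\Delta$ ending at an $x$-ring with probability tending to~$1$; since the last vertex of each sub-chain is $x$, it is compatible with any $y$-ring starting the next sub-chain, so Borel--Cantelli (or a subadditive argument in the spirit of the Kingman-based definition of $\gamma$ in equation~(2)) yields a single chain of total length at least $(1-\varepsilon)\tfrac{2e-1}{(e-1)^2}\Delta\,t\,(1+o(1))$ almost surely. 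Dividing by $t$, invoking equations (2)--(3), and letting $\varepsilon\to 0$ and then $n\to\infty$ gives the stated $\liminf$.

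The main obstacle I foresee is the exact second-moment bookkeeping. Producing the specific constant $\tfrac{2e-1}{(e-1)^2}$ rather than a weaker geometric bound requires the girth$\,\ge 5$ assumption to ensure that, upon divergence of two chains in $B_2(x)$, the $\Delta-1$ grandchildren attached to distinct $y_i$ are disjoint and the corresponding Poisson events are truly independent; any 4-cycle would create branch overlap and dilute the constant. A secondary difficulty is controlling the Stirling remainder and the pair-counting uniformly in $\Delta$ when $m$ scales linearly in $\Delta t$, which is precisely where the series identity above must be recovered rather than estimated by a cruder geometric sum.
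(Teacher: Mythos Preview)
Your plan is genuinely different from the paper's. The paper does not use last-passage percolation or a second-moment argument at all: it builds an explicit slower auxiliary process that tracks a moving path $(x_1,x_2,x_3)$ together with up to $M$ ``active'' siblings of $x_3$ (neighbours of $x_2$ that have already rung), encodes this as a finite Markov chain on $\{1,\dots,M\}$ with invariant law $p_M(k)\propto 1/k!$, and reads off the growth rate via Birkhoff's ergodic theorem. The constant emerges as the product $\bigl(\sum_k p(k)(k+1)\bigr)\bigl(\sum_k p(k)\tfrac{k}{k+1}\bigr)=\tfrac{2e-1}{e-1}\cdot\tfrac{1}{e-1}$ after letting $M\to\infty$; the girth $\ge 5$ hypothesis is used only to guarantee that the neighbours of $x_3$ and the siblings' neighbours are all distinct, so that the transition rates are exactly as drawn.

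Your proposal, by contrast, has a real gap at its centre. Once you restrict chains to $B_2(x)$, a grandchild $z_{i,j}$ has closed neighbourhood of size $2$ inside $B_2(x)$, and a short eigenvector computation shows the spectral radius of $A(B_2(x))+\mathbf 1$ is $1+\sqrt{2\Delta-1}$. Hence the number of admissible vertex sequences of length $m$ inside $B_2(x)$ grows like $(1+\sqrt{2\Delta-1})^{m}$, not $(\Delta+1)^{m-1}$; your stated first moment is for the full graph, not for the restricted problem you actually analyse. Consequently $\mathbb E[X_m]\to 0$ already for $m\gg e\sqrt{2\Delta}\,t$, so Paley--Zygmund on $X_m$ cannot produce any chain of length of order $\Delta t$, let alone $1.506\,\Delta t$. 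If instead you work in the full graph, the girth hypothesis no longer forces the independence you invoke after the chains diverge, and the overlap bookkeeping collapses. Finally, the sentence ``summing the resulting geometric-type series and evaluating $\sum_{k\ge1}(k+1)e^{-k}=\tfrac{2e-1}{(e-1)^2}$ identifies the critical threshold'' is where the entire argument should live, but you give no mechanism linking this series to the second-moment ratio; the numerical coincidence with the target constant is not a derivation.
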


\begin{proof}
Fix $M \in \mathbb{N}$ and $n_0 \in \mathbb{N}$, such that $m:= \Delta \mathcal{G}_n+1 > M$ for all $n \geq n_0$.

We construct a random growth model, which evolves slower than our original one. For simplicity, assume that the first three vertices $x_1,x_2,x_3$, which grow, form a path $(x_1,x_2,x_3)$ in $\mathcal{G}$. Now only take into account the neighbours of $x_2$ and $x_3$ and, for the time being, neglect the possible growth of any other vertex. We also neglect the growth of $x_1$, $x_2$ and $x_3$.

If a neighbour $x_4$ of $x_3$ grows, then we forget about the height of $x_1$ and consider the path $(x_2,x_3,x_4)$ instead of $(x_1,x_2,x_3)$ and keep on with our procedure. Note that by this transition the maximal height grows by one unit.
 
If a neighbour $x'_3$ of $x_2$ grows, we memorise its height. Then we will neglect further growth of $x'_3$, but in the future we will take into account its neighbours, which differ from $x_2$. If a neighbour $x'_4 \neq x_2$ of $x'_3$ grows, we will only memorise the height of $x_2$, $x'_3$ and $x'_4$ and forget the height of all other vertices. Note that by this rule, we again arrive at our initial situation.

We will memorise the height of up to $M$ neighbours of $x_2$. If this number is exhausted, we will not take into account the potential growth of a neighbour of $x_2$ anymore. By counting the number of neighbours of $x_2$ in our continuous time setting, we therefore obtain Markov process, whose transition rates are given in the following picture.\\

\noindent \begin{tikzpicture}[scale=2]
\node (1) at (0,0) {$1$};
\node (2) at (1,0) {$2$};
\node (3) at (2,0) {$3$};
\node (4) at (3,0) {$4$};
\node (...) at (4,0) {$\ldots$};
\node (5) at (5,0) {$M$};
\draw [->] (0.1,-0.2) arc (200:340:12pt);
\draw [->] (1.1,-0.2) arc (200:340:12pt);
\draw [->] (2.1,-0.2) arc (200:340:12pt);
\draw [->] (3.1,-0.2) arc (200:340:12pt);
\draw [->] (4.1,-0.2) arc (200:340:12pt);
\draw [->] (-0.1,0.1) arc (45:360:10pt);

\draw[<-] (0,0.2) arc(180:0:0.5cm and 0.3cm);
\draw[<-] (0,0.2) arc(180:0:1cm and 0.6cm);
\draw[<-] (0,0.2) arc(180:0:1.5cm and 0.9cm);
\draw[<-] (0,0.2) arc(180:0:2cm and 1.2cm);
\draw[<-] (0,0.2) arc(180:0:2.5cm and 1.5cm);

\node at (-0.5,0.32) {$m$};
\node at (1.15,0.4) {$2m$};
\node at (2.15,0.4) {$3m$};
\node at (3.15,0.4) {$4m$};
\node at (4.15,0.4) {$5m$};
\node at (5.2,0.4) {$Mm$};
\node at (0.5,-0.27) {$m-1$};
\node at (1.5,-0.27) {$m-2$};
\node at (2.5,-0.27) {$m-3$};
\node at (3.5,-0.27) {$m-4$};
\node at (4.5,-0.27) {$m-N$};
\end{tikzpicture}~\\~\\

Let $p_{m,M} (k)$, $k=1,\ldots,m$, denote the invariant probability distribution. Then, by including the time-scaling induced by the transition rates, we know
\begin{align*}
\gamma ( \mathcal{G}_n) \geq & \left( \sum\limits_{k=1}^{M-1}  p_{m,M} (k) \left\{  (k+1)m-k \right\} + p_{m,M} (M) Mm \right)\\
& \cdot \left( \sum\limits_{k=1}^{M-1} p_{m,M} (k) \frac{km}{km+(m-k)} + p_{m,M} (M) \right).
\end{align*}
For $n \rightarrow \infty$ we know $m \rightarrow \infty$ and therefore $p_{m,M} (k) \rightarrow p_M (k)$, where $p_M (k)$, $k=1,\ldots,M$, is the invariant probability of time discrete Markov chain, which is depicted in the following picture.\\~\\

\noindent \begin{tikzpicture}[scale=2]
\node (1) at (0,0) {$1$};
\node (2) at (1,0) {$2$};
\node (3) at (2,0) {$3$};
\node (4) at (3,0) {$4$};
\node (...) at (4,0) {$\ldots$};
\node (5) at (5,0) {$M$};
\draw [->] (0.1,-0.2) arc (200:340:12pt);
\draw [->] (1.1,-0.2) arc (200:340:12pt);
\draw [->] (2.1,-0.2) arc (200:340:12pt);
\draw [->] (3.1,-0.2) arc (200:340:12pt);
\draw [->] (4.1,-0.2) arc (200:340:12pt);
\draw [->] (-0.1,0.1) arc (45:360:10pt);

\draw[<-] (0,0.2) arc(180:0:0.5cm and 0.3cm);
\draw[<-] (0,0.2) arc(180:0:1cm and 0.6cm);
\draw[<-] (0,0.2) arc(180:0:1.5cm and 0.9cm);
\draw[<-] (0,0.2) arc(180:0:2cm and 1.2cm);
\draw[<-] (0,0.2) arc(180:0:2.5cm and 1.5cm);

\node at (-0.5,0.35) {$\frac{1}{2}$};
\node at (1.1,0.4) {$\frac{2}{3}$};
\node at (2.1,0.4) {$\frac{3}{4}$};
\node at (3.1,0.4) {$\frac{4}{5}$};
\node at (4.1,0.4) {$\frac{5}{6}$};
\node at (5.1,0.4) {$1$};
\node at (0.5,-0.3) {$\frac{1}{2}$};
\node at (1.5,-0.3) {$\frac{1}{3}$};
\node at (2.5,-0.3) {$\frac{1}{4}$};
\node at (3.5,-0.3) {$\frac{1}{5}$};
\node at (4.5,-0.3) {$\frac{1}{M}$};
\end{tikzpicture}~\\

\noindent A simple calculation gives 
\[
p_M(k) = \frac{1}{k! \left( 1+1/2+1/6 + \ldots + 1/M! \right)}, \quad k=1,\ldots,M.
\]
For $m \rightarrow \infty$ we have
\begin{align*}
\frac{1}{m+1} \sum\limits_{k=1}^{M-1} p_{m,M} (k) \left\{ (k+1) m - k \right\} &\rightarrow \sum\limits_{k=1}^{M-1} p_M(k)~(k+1),\\
\sum\limits_{k=1}^{M-1} p_{m,M} (k) \frac{km}{km+(m-k)} &\rightarrow \sum\limits_{k=1}^{M-1} p_M(k) \frac{k}{k+1}.
\end{align*}
Therefore, we conclude for fixed $M$
\begin{align*}
\liminf\limits_{n \rightarrow \infty} \frac{\gamma ( \mathcal{G}_n)}{ \Delta \mathcal{G}_n + 1} \geq \left( \sum\limits_{k=1}^{M-1} p_M (k)~(k+1) \right) \left( \sum\limits_{k=1}^{M-1} p_M(k) \frac{k}{k+1} \right).
\end{align*}
Finally, we let $M \rightarrow \infty$ and note that
\begin{align*}
\sum\limits_{k=1}^{M-1} p_M(k)~(k+1) &\rightarrow \frac{1}{e-1} \sum\limits_{k=1}^\infty \frac{k+1}{k!} = \frac{2e-1}{e-1},\\
\sum\limits_{k=1}^{M-1} p_M (k) \frac{k}{k+1} &\rightarrow \frac{1}{e-1} \sum\limits_{k=1}^\infty \frac{k}{(k+1)!} = \frac{1}{e-1}.
\end{align*}
\end{proof}

\noindent Let us give a simple conclusion of both Proposition 2 and Theorem 3. 

\begin{corollary}
Let $(\mathcal{G}_n)_{n \in \mathbb{N}}$ be a sequence of graphs. Then
\[ \lim\limits_{n \rightarrow \infty} \gamma ( \mathcal{G}_n) = \infty \quad \quad \textnormal{if~and~only~if} \quad \quad \lim\limits_{n \rightarrow \infty} \Delta \mathcal{G}_n = \infty. \]
\end{corollary}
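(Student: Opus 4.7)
The plan is to sandwich $\gamma(\mathcal{G}_n)$ between two quantities that depend only on $\Delta \mathcal{G}_n$, thereby reducing the biconditional to the trivial observation that these bounds diverge precisely when $\Delta \mathcal{G}_n$ diverges.

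For the implication $\Delta \mathcal{G}_n \to \infty \Longrightarrow \gamma(\mathcal{G}_n) \to \infty$ I would pick in each $\mathcal{G}_n$ a vertex of maximal degree; together with its neighbours it produces a subgraph isomorphic to the star $\mathcal{S}_{\Delta \mathcal{G}_n + 1}$. The subgraph monotonicity of $\gamma$ (the proposition at the end of Section 5) gives
\[
\gamma ( \mathcal{G}_n ) \geq \gamma \bigl( \mathcal{S}_{\Delta \mathcal{G}_n + 1} \bigr),
\]
and the star asymptotic $\gamma ( \mathcal{S}_m ) \sim \log(m)/\log \log(m)$ from Section 3 shows that the right-hand side tends to infinity.

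For the converse I would argue by contrapositive. If $\Delta \mathcal{G}_n$ does not diverge, then along some subsequence it stays bounded. The standard row-sum bound for the spectral radius of a symmetric nonnegative matrix, applied to $A(\mathcal{G}_n) + \mathbf{1}$, gives $\rho_n \leq \Delta \mathcal{G}_n + 1$; this is precisely the inequality pointed out in the remark following Theorem 3. Combining with Theorem 3 yields $\gamma ( \mathcal{G}_n) \leq e \left( \Delta \mathcal{G}_n + 1 \right)$, which is bounded on the same subsequence and hence cannot tend to infinity.

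I do not anticipate any real obstacle: the corollary is essentially a repackaging of Theorem 3 together with the star-subgraph trick and the divergence of $\gamma ( \mathcal{S}_m )$. The sharper lower bound for regular graphs of large girth obtained in Section 6 is not needed for the qualitative statement and would only improve the asymptotic constant.
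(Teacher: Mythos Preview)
Your proposal is correct and matches the paper's intended argument: the paper offers no detailed proof but simply labels the corollary ``a simple conclusion of both Proposition~2 and Theorem~3,'' which is precisely the pair of ingredients you use for the two directions. You are also right to note that Proposition~3 (the subgraph monotonicity $\gamma(\mathcal{G}) \leq \gamma(\mathcal{G}')$) is tacitly needed to transfer the star asymptotic to a general $\mathcal{G}_n$, and that Proposition~4 plays no role here.
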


\section{A brief look at a different growth model}

For a better understanding of our growth model, it is natural to also study other ballistic deposition processes, which arise by modifying the recursion (1). The so-called nearest-neighbour ballistic deposition model is specified by 
\[ \tilde{h}_x := \max\limits_{y \in [x]} ~\{ h_y + \delta_{xy} \},   \]
where $\delta_{xy}$ is the Kronecker symbol. For a graph $\mathcal{G}$ we can define its asymptotic growth parameter $\tilde{\gamma}(\mathcal{G})$ in this new model in the same way as in the Introduction. A simple coupling argument gives $\tilde{\gamma} ( \mathcal{G}) \leq \gamma ( \mathcal{G})$ for arbitrary $\mathcal{G}$, and
\[
\gamma ( \mathcal{S}_n) \leq \tilde{\gamma}( \mathcal{S}_n) + 2 \quad \textnormal{for~all~} n \in \mathbb{N}.
\]
In particular, we see that Proposition 2, Theorem 3 and Corollary 1 also hold if we replace $\gamma ( \mathcal{G})$ by $\tilde{\gamma} ( \mathcal{G})$. However, direct calculations can reveal some differences. Consider the case $\mathcal{G}= \mathcal{K}_n$ with $n \geq 1$ fixed. Then, by counting the number of vertices of maximal height, we arrive at a Markov chain, whose transition probabilities are described in the following picture.\\~\\

\noindent \begin{tikzpicture}[scale=2]
\node (1) at (0,0) {$1$};
\node (2) at (1,0) {$2$};
\node (3) at (2,0) {$3$};
\node (4) at (3,0) {$4$};
\node (...) at (4,0) {$\ldots$};
\node (5) at (5,0) {$n$};
\draw [->] (0.1,-0.2) arc (200:340:12pt);
\draw [->] (1.1,-0.2) arc (200:340:12pt);
\draw [->] (2.1,-0.2) arc (200:340:12pt);
\draw [->] (3.1,-0.2) arc (200:340:12pt);
\draw [->] (4.1,-0.2) arc (200:340:12pt);
\draw [->] (-0.1,0.1) arc (45:360:10pt);

\draw[<-] (0,0.2) arc(180:0:0.5cm and 0.3cm);
\draw[<-] (0,0.2) arc(180:0:1cm and 0.6cm);
\draw[<-] (0,0.2) arc(180:0:1.5cm and 0.9cm);
\draw[<-] (0,0.2) arc(180:0:2cm and 1.2cm);
\draw[<-] (0,0.2) arc(180:0:2.5cm and 1.5cm);

\node at (-0.5,0.35) {$\frac{1}{n}$};
\node at (1.1,0.4) {$\frac{2}{n}$};
\node at (2.1,0.4) {$\frac{3}{n}$};
\node at (3.1,0.4) {$\frac{4}{n}$};
\node at (4.1,0.4) {$\frac{5}{n}$};
\node at (5.1,0.4) {$1$};
\node at (0.5,-0.3) {$\frac{n-1}{n}$};
\node at (1.5,-0.3) {$\frac{n-2}{n}$};
\node at (2.5,-0.3) {$\frac{n-3}{n}$};
\node at (3.5,-0.3) {$\frac{n-4}{n}$};
\node at (4.5,-0.3) {$\frac{1}{n}$};
\end{tikzpicture}~\\~\\

\noindent Let $\Pi$ denote the unique invariant measure. Then, for $k=2,\ldots,n$, we know 
\[
\Pi(k) = \frac{n-(k-1)}{n}~\Pi(k-1) = \Pi(1) \prod\limits_{l=1}^{k-1} \frac{n-l}{n},
\]
and using $\Pi(1) + \cdots + \Pi(n) = 1$ we find
\[
\Pi(1) = \left( \sum\limits_{k=1}^n \prod\limits_{l=1}^{k-1} \frac{n-l}{n} \right)^{-1}.
\]
Observe that Birkhoff's ergodic theorem yields
\[ \tilde{\gamma} ( \mathcal{K}_n) = n \sum\limits_{k=1}^n \Pi(k) \frac{k}{n} = \sum\limits_{k=1}^n \Pi(k) k.   \]
For small $n$ one can calculate $\Pi$ and the exact value of $\tilde{\gamma} ( \mathcal{K}_n)$ in this way.
\begin{figure}[!h]
\begin{center}
\begin{tabular}{|M{4em}|M{4em}|M{4em}|M{4em}|M{4em}|M{4em}|}
\hline
\multirow{2}{*}{$n$}&\multirow{2}{*}{$1$}& \multirow{2}{*}{$2$}& \multirow{2}{*}{$3$}& \multirow{2}{*}{$4$}& \multirow{2}{*}{$5$}\\ &&&&&\\ \hline
\multirow{2}{*}{$ \tilde{\gamma}(\mathcal{K}_n)$}& \multirow{2}{*}{$1$}& \multirow{2}{*}{$\frac{4}{3}$}& \multirow{2}{*}{$\frac{27}{17}$}& \multirow{2}{*}{$\frac{128}{71}$}& \multirow{2}{*}{$\frac{3125}{1569}$}\\ &&&&& \\ \hline
\end{tabular} \end{center} \end{figure}

\noindent For general $n \geq 1$ Mathematica gives the representation
\begin{equation}
\tilde{\gamma} ( \mathcal{K}_n) = \frac{ n}{ e^n~n^{-n}~\Gamma(n+1,n) - 1},
\end{equation}
A series expansion due to Tricomi, see e.g. Chapter 2.3 in \cite{Gau98}, yields 
\[
\Gamma(n+1,n) \sim \frac{1}{2}~\Gamma(n+1) \quad \textnormal{for}~ n \rightarrow \infty,
\]
and hence allows us to verify
\[
e^n~n^{-n}~\Gamma(n+1,n) \sim e^n~n^{-n}~\frac{1}{2}~\Gamma(n+1) \sim \sqrt{\frac{\pi}{2}}~ n^{1/2} \quad \textnormal{for}~ n \rightarrow \infty.
\]
Applying this to (18) directly gives
\[ \tilde{\gamma} ( \mathcal{K}_n) \sim \sqrt{ \frac{2}{\pi}  }~n^{1/2} \quad \textnormal{for~} n \rightarrow \infty.  \]~\\

\noindent \textbf{Acknowledgement.} The author thanks Elmar Teufl and Martin Zerner for many helpful discussions.

\nocite{BSR13}

\nocite{Sep00}

\bibliography{mybib.bib}
\bibliographystyle{plain}~\\

\noindent \textsc{Eberhard Karls Universit{\"a}t T{\"u}bingen, Mathematisches Insitut,\\
Auf der Morgenstelle 10, 72076 T{\"u}bingen, Germany}\\

\noindent \texttt{georg.braun@uni-tuebingen.de}

\end{document}